\newtheorem{theorem}{Theorem}[section]
\newtheorem{lemma}[theorem]{Lemma}
\newtheorem{corollary}[theorem]{Corollary}
\newtheorem{proposition}[theorem]{Proposition}
\newtheorem*{theoremA}{Theorem A}
\newtheorem*{theoremB}{Theorem B}
\newtheorem*{theoremC}{Theorem C}
\theoremstyle{definition}
\numberwithin{equation}{section}
\newcommand{\N}{{\mathbb{N}}}
\newcommand{\Z}{{\mathbb{Z}}}
\newcommand{\R}{{\mathbb{R}}}
\newcommand{\p}{\mathbf{p}}
\newcommand{\cqd}{\hfill $\square$}
\def\Z{\mathbb Z}
\def\1{{{\mathit 1} \!\!\>\!\! I} }
\renewcommand{\phi}{\varphi}
\title[Interval exchanges]{Orbit structure of   interval exchange \linebreak transformations with flip}
\subjclass[2000]{Primary 37E05 Secondary 37C20, 37E15}
\keywords{Rauzy induction, interval exchange transformation, recurrence, periodic orbits, flips}
\begin{document}

\maketitle

\centerline{\scshape Arnaldo Nogueira}
\smallskip
{\footnotesize
 \centerline{Institut de Math\'ematiques de Luminy, Universit\'e de la M\'editerran\'ee}
 \centerline {163, avenue de Luminy - Case 907, 13288 Marseille Cedex 9, France}
 \centerline{email: nogueira@iml.univ-mrs.fr}
} 

\medskip

\centerline{\scshape Benito Pires}

\smallskip

{\footnotesize
 \centerline{Departamento de Computa\c c\~ao e Matem\'atica da Universidade de S\~ao Paulo}
   \centerline{Av. Bandeirantes 3900, Monte Alegre, 14040-901, Ribeir\~ao Preto - SP, Brazil}
   \centerline{emails: benito@ffclrp.usp.br, bfpires@hotmail.com}
}

\medskip

\centerline{\scshape Serge Troubetzkoy}
\smallskip
{\footnotesize
\centerline{Centre de Physique Th\'eorique}
\centerline{F\'ed\'eration de Recherche des Unit\'es de Math\'ematiques de Marseille}
\centerline{Institut de Math\'ematiques de Luminy,
Universit\'e de la M\'editerran\'ee }
\centerline{163, avenue de Luminy - Case 907, 13288 Marseille Cedex 9, France}
\centerline{email: troubetz@iml.univ-mrs.fr}
}

\bigskip


\linespread{1.2}
\marginsize{2.5cm}{2.5cm}{1cm}{2cm}

\begin{abstract} A sharp bound on the number of invariant components of an interval exchange transformation is provided. 
More precisely, it is proved that the number of periodic components $n_{\rm per}$ and the number of minimal components 
$n_{\rm min}$ of an interval exchange transformation of $n$ intervals satisfy $n_{\rm per}+2\,n_{\rm min}\le n$. Besides, it 
is shown that
 almost all interval exchange transformations are typical, that is, have all the periodic components stable and all the minimal components 
robust (i.e. persistent under almost all small perturbations). Finally, we find all the possible values for
the integer vector $(n_{\rm per},n_{\rm min})$ for all typical interval exchange transformation of $n$ intervals.
\end{abstract}

\maketitle

\section{Introduction}

In this article we study interval exchange transformations (IETs) with flip(s). 
An IET is an injective piecewise isometry of an interval having finitely many jump discontinuities. An IET {\it has flip(s)} if it reverses the orientation of a subinterval of its domain, otherwise the IET is said to be {\it oriented}.
These maps are important objects in ergodic
theory, they have been intensively studied since the 1960s.  They occur naturally in the study of polygonal billiards, measured foliations
and flat surfaces \cite{MaTa,Masur}. In the context of polygonal billiards,  IETs were already studied, albeit in a different language,
in 1905 \cite{Lennes}. 
All the possible non-trivial recurrence of flows on closed surfaces can be explained by them \cite{Ar, Gu,Le}.  

The starting point of our investigation is the result of Nogueira \cite{N2} which states that almost every IET with flip(s) has (at least) one stable periodic trajectory. We want to know whether we can say more about the invariant components of these maps.

It is well known that IETs decompose into minimal and periodic components. This decomposition was first studied in the orientable (no flip(s)) case by Mayer in 1943 \cite{Ma} whereas in the case of IETs with flip(s), various partial or non-optimal versions have been demonstrated by other authors (\cite{Ara,KH,N2}).
We begin by giving in \mbox{Theorem A} a sharp estimate on the number of such components.  This result is obtained by means of a careful
analysis of the saddle-connections. 
In Theorem B, we show that for almost every IET with flip(s) all periodic components are stable and all minimal components are robust.
Here a  minimal component is called robust if it persists under almost all small perturbations.  The proof of this theorem is based on
Rauzy induction, it yields an almost sure ``algorithm'' for finding the decomposition into robust minimal components and stable periodic components.
 By using this algorithm, in Theorem C we give a complete classification of which decompositions can occur, this result turns out to be a converse of
Theorems A and B.

\section{Statement of the results}

An injective map $T:(0,c)\to (0,c)$ is an interval exchange transformation of $n$ intervals ($n$-IET) if there exist $n+1$
real numbers $0=x_0<x_1<\ldots<x_{n}=c$ such that $T\vert _{(x_{i-1},x_i)}$ is an isometry for all
$i\in\{1,\ldots,n\}$. The domain of $T$ is the set ${\rm Dom}\,(T)=\bigcup_{i=1}^n (x_{i-1},x_i)$. Notice that
the derivative $T'$ of $T$ is a locally constant function, moreover $\vert T'(x)\vert=1$ for all $x\in {\rm Dom}\,(T)$.
We say that $T$ is an {\it oriented} IET if  $T'(x)=1$ for all $x\in {\rm Dom}\,(T)$, otherwise  $T$ is called an IET {\it with flip(s)}. The interval $(x_{i-1},x_i)$ is  {\it flipped} if $T'(m_i)=-1$,
where $m_i=(x_{i-1}+x_i)/2$.

The  vector $\lambda=(\lambda_1,\ldots,\lambda_n)$ defined by $\lambda_i=x_i-x_{i-1}$ is called the {\it length vector}. We let $\theta=(\theta_1,\ldots,\theta_n)$ denote the {\it flip(s) vector} defined by
$\theta_i=T'(m_i)$, and $\pi$ be the permutation of $\{1,2,\ldots,n\}$ satisfying $T(m_{\pi^{-1}(1)})<T(m_{\pi^{-1}(2)})<\ldots < T(m_{\pi^{-1}(n)})$. In other words, $\pi(i)$ gives the position of the interval $T((x_{i-1},x_i))$ in the domain of $T^{-1}$. We may represent $\pi$ as an $n$-tuple by setting $\pi=(\pi_1,\ldots,\pi_n)=(\pi(1),\ldots,\pi(n))$. Finally, let $\p=(p_1,\ldots,p_n)$ be the {\it signed permutation} defined by $p_i=\theta_i \pi_i$.   A signed permutation $\p$ has {\it flip(s)} if $ p_i/\vert p_i\vert = -1$ for some $i$. 
Throughout the article we denote the permutation $\vert\p\vert=(\vert p_1\vert,\ldots,\vert p_n\vert)$ by the symbol $\pi$ and its {\em flip(s) vector} by $\theta = (\theta_1,\dots,\theta_n)$.

A signed permutation $\p$ is called {\it irreducible} if
$\pi (\{1,2,\ldots,k\})=\{1,2,\ldots,k\}$ implies $k=n$, otherwise we say that $\p$ is {\it reducible}. 
We denote by $\Lambda_n\subset\R^n$ the set of length vectors endowed with the Lebesgue measure (of the cone of positive vectors) and we let ${\mathscr{P}}_n$ (respectively ${\mathscr{P}}_n^{\rm irred}$, $\mathscr{P}_n^{\rm red}$) be the set of  signed permutations (respectively irreducible signed permutations, reducible signed permutations) endowed with the counting measure. The cartesian product $\Lambda_n\times{\mathscr{P}}_n$ is endowed with the product measure. In this way, to each $(\lambda,\p)\in \Lambda_n\times{\mathscr{P}}_n$ there corresponds an $n$-IET $T_{(\lambda,\p)}$. The IET $T_{(\lambda,\p)}$ is called  {\it irreducible} if $\p\in{\mathscr{P}}_n^{\rm irred}$. Given a subset $U\subset \Lambda_n\times {\mathscr P}_n$, we let
${\mathcal T}(U)=\{T_{(\lambda,\p)}\mid (\lambda,\p)\in U\}$ denote the set of $n$-IETs whose data belong to $U$.

An open interval $J\subset {\rm Dom}\,(T)$ is {\it rigid} if all positive
iterates $T^k$ of $T$ are defined (and so are continuous) on $J$.  A rigid interval
$J$ is a {\em maximal} rigid interval if any other rigid interval $K\subset {\rm Dom}\,(T)$
is either disjoint of $J$ or contained in $J$. If $J$ is a rigid interval then there exists $m\in\N$ such that
$T^m(J)=J$. We call the orbit $\bigcup_{k=0}^{m-1} T^k(J)$ a {\it periodic component} of $T$. 
Given two subsets $X,Y\subset\R$  and a point $y\in Y$, set $d(y,X)=\inf\{\vert y-x\vert:x\in X\}$. We also define
$\rho(X,Y)=\sup\, \{d(y,X):y\in Y\}. $
We say that a periodic component $O=\bigcup_{k=0}^{m-1} T^k(J)$  of $T$ is  {\it stable}   if for all $\epsilon>0$ there exists an open neighborhood $V_{\epsilon}$ 
of $T$ such that all $S\in V_{\epsilon}$ has a periodic component $O'$ satisfying $\rho(O,O')<\epsilon$.

Given an IET $T:(0,c)\to (0,c)$, let $\mathbb{I}=\mathbb{I}_+\cap\mathbb{I}_-$, where
\begin{eqnarray*}
 \mathbb{I}_+&=&\{x\in (0,c)\mid T^k(x)\in {\rm Dom}\,(T)\,\, \text{for all}\,\, k\in\N\},\\
 \mathbb{I}_-&=&\{x\in (0,c)\mid T^{-k}(x)\in {\rm Dom}\,\,(T^{-1})\,\, \text{for all}\,\, k\in\N\},
 \end{eqnarray*}
$T^0$ is the identity map and $T^k$ is the $k$-th iterate of $T$. The orbit of $x\in (0,c)$
is the set $O(x)=\{T^k(x)\mid k\in\Z\,\,\text{and}\,\, x\in{\rm Dom}\,(T^k)\}$. 
We call a non-empty open set $O \subset (0,c)$ a {\em
minimal component} of $T$ if $O={\rm int}\,(\overline{O(x)})$ for all $x\in O\cap(\mathbb{I}_-\cup\mathbb{I}_+$),
where ${\rm int}\,(B)$ (respectively $\overline{B}$) refers to the interior (respectively the topological closure) of the set $B$. In particular, in a minimal component $O$ every  infinite orbit starting at a point of $O$ is dense in $O$.
A minimal component $O$ of  $T$  is {\it robust} if for all $\epsilon>0$ there exist a  neighborhood $
V_\epsilon$ of $T$ and a measure zero set $N$ such that all $S\in V\setminus N$ have a minimal component $O'$ satisfying $\rho(O,O')<\epsilon$.

The main results of this paper are the following:

\begin{theoremA} The numbers $n_{\rm per}$ of periodic components and  $n_{\rm min}$ of minimal components of an $n$-IET
satisfy the inequality $n_{\rm per}+2\,{n_{\rm min}}\le n$. 
\end{theoremA}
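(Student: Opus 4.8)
\medskip
\noindent\emph{Plan of the proof.}
The idea is a counting argument over ``singular sides''. Let $\mathcal L$ be the set of $2n$ formal symbols $x_i^{+}$ and $x_i^{-}$, $1\le i\le n-1$, together with $x_0^{+}$ and $x_n^{-}$, where $x_0=0$ and $x_n=c$; read $x_i^{+}$ (resp.\ $x_i^{-}$) as ``a one--sided right (resp.\ left) neighbourhood of $x_i$''. Given an invariant component $O$ of the $n$-IET $T$ --- periodic or minimal --- let $\mathcal S(O)\subseteq\mathcal L$ collect those symbols $x_i^{\pm}$ for which $O$, viewed as an open subset of $(0,c)$, contains $(x_i,x_i+\delta)$, resp.\ $(x_i-\delta,x_i)$, for some $\delta>0$ (with the obvious reading at $x_0=0$ and $x_n=c$). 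A one--sided neighbourhood of a point is connected, hence it meets at most one component; so $\mathcal S(O)\cap\mathcal S(O')=\emptyset$ for $O\ne O'$ and therefore
\[
   \sum_{O}|\mathcal S(O)|\le|\mathcal L|=2n .
\]
It thus suffices to prove that $|\mathcal S(O)|\ge2$ for every periodic component and $|\mathcal S(O)|\ge4$ for every minimal component; dividing the resulting inequality $2\,n_{\rm per}+4\,n_{\rm min}\le 2n$ by $2$ gives Theorem~A.

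For a periodic component $O=\bigcup_{k=0}^{m-1}T^{k}(J)$ with $J=(a,b)$ a \emph{maximal} rigid interval I would argue as follows. If $O=(0,c)$ then $\mathcal S(O)=\mathcal L$ and there is nothing to prove, so suppose $O\subsetneq(0,c)$ and write $O$ as its (finite) union of maximal open intervals. Maximality of $J$ forbids extending it past $a$ or past $b$; taking, on each side, the first iterate at which a positive iterate of $T$ ceases to be continuous, one finds that some iterate $T^{N}(J)$ is an open interval having one of $0,x_1,\dots,x_{n-1},c$ as an endpoint and lying on one side of it --- and that side is a one--sided neighbourhood of this point contained in $O$, i.e.\ a symbol of $\mathcal S(O)$. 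The $a$--side and the $b$--side yield two such symbols, and the key point is that they are distinct: this follows from injectivity of $T$ together with the fact that each $T^{N}|_{J}$ is a local isometry, which prevents the two one--sided neighbourhoods detected from the two ends of $J$ from coinciding. Hence $|\mathcal S(O)|\ge2$.

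For a minimal component $O$, I would first note that $O$ must contain some discontinuity $x_i$ of $T$ in its interior --- otherwise $T|_{O}$ would be a single isometry of $O$ onto itself, hence the identity or a reflection, and not minimal --- so that $\{x_i^{-},x_i^{+}\}\subseteq\mathcal S(O)$. To produce two further symbols one analyses the ``outer'' ends of $O$: the leftmost and rightmost endpoints of the maximal intervals constituting $O$ are not bi--infinite points (their orbits must reach a discontinuity or an endpoint, else $O$ could be enlarged), so following each of them along its separatrix until it meets one of $0,x_1,\dots,x_{n-1},c$ produces a symbol of $\mathcal S(O)$; equivalently, one reads these symbols off the extreme intervals and the breakpoints of the first--return map $\widehat T_O$ of $T$ to $O$, which is itself a minimal interval exchange (with flips) of $O$ having at least two intervals, each contained in a single interval of $T$. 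The injectivity/local--isometry argument again forces the symbols obtained from the two outer ends, and from the interior of $O$, to be pairwise distinct, so $|\mathcal S(O)|\ge4$.

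The sharpness being the content of the theorem, the delicate part is precisely this disjointness bookkeeping: one must control how a separatrix terminating at a discontinuity is shared between the (at most two) components it separates, the correct statement being that it is charged against two \emph{different} one--sided neighbourhoods, one to each side, so that no symbol of $\mathcal L$ is ever counted twice and, in particular, a minimal component genuinely swallows four of them rather than only two or three. Establishing this requires the careful, case--by--case analysis of saddle connections announced in the introduction --- distinguishing whether a boundary point of a component is a discontinuity, an endpoint, or an interior point of an exchanged interval, and whether its orbit reaches a singularity in forward or in backward time --- and that analysis, rather than the counting scheme itself, is where the real work lies.
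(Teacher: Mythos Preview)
Your counting scheme over one-sided neighbourhoods is attractive, but the individual lower bound $|\mathcal S(O)|\ge 2$ for a periodic component is \emph{false} in the presence of flips, and this is precisely where your injectivity argument breaks down. Take the $3$-IET with $\mathbf p=(3,-1,2)$ and length vector $\lambda=(1,3,2)$, so that $x_1=1$, $x_2=4$, and $T|_{(1,4)}$ is the reflection $x\mapsto 4-x$. The interval $J=(1,3)$ is maximal rigid with $T(J)=J$; it is a bona fide periodic component. Its only side lying at a point of $\{0,x_1,x_2,c\}$ is $x_1^{+}$: the right endpoint $b=3$ is neither a discontinuity of $T$ nor an endpoint of $[0,c]$. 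When you follow the $b$-side $3^{-}$ forward you land at $T(3^{-})=1^{+}=x_1^{+}$ --- the \emph{same} symbol you obtained from the $a$-side. Injectivity of $T$ does not prevent this coincidence, because the two one-sided neighbourhoods $T^{0}((a,a+\delta))$ and $T^{1}((b-\delta,b))$ are computed at different iterates; what collapses them is the orientation reversal of $T|_J$. Thus $|\mathcal S(O)|=1$ here, while the complementary component absorbs the remaining five symbols. The same mechanism undermines the claim $|\mathcal S(O)|\ge 4$ for minimal components: the two ``outer ends'' you follow can again merge into a single symbol once a flip is crossed.

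The paper's argument avoids this obstacle by not attempting to bound each component's contribution separately. Instead it orders the components $O_1,\dots,O_N$ by their infima and assigns to each $O_i$ with $i\ge 2$ \emph{one} singular point lying in $\partial O_i\cap\partial O_j$ for some earlier $j<i$ (Lemma~\ref{3.3}); this assignment is injective because a point of $(0,c)$ has only two sides and hence bounds at most two components. That spends $N-1$ singular points. Each minimal component then contributes one further singular point lying in its \emph{interior} (Lemma~\ref{3.2}); interior points of distinct components are automatically distinct from one another and from all boundary points. Comparing against the $n-1$ available singular points yields $n_{\rm per}+2\,n_{\rm min}\le n$ directly. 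The moral is that the bookkeeping must be carried out \emph{across} components rather than within each one; your side-counting framework does not obviously support such a transfer.
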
 

\begin{theoremB}\label{MT} Almost all interval exchange transformation have only stable periodic components and robust minimal components.
\end{theoremB}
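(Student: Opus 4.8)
The plan is to argue by induction on the number $n$ of intervals, with Rauzy induction adapted to flips as the main tool; the base case $n=1$ is immediate, since a $1$-IET is either the identity or a single flip of $(0,c)$, and in both cases $(0,c)$ is a stable periodic component (any nearby $1$-IET of the same type agrees with it up to a small change of length). First I would dispose of the reducible case: if $\pi(\{1,\dots,k\})=\{1,\dots,k\}$ with $1\le k<n$, then $T_{(\lambda,\p)}$ restricts to a $k$-IET on $(0,x_k)$ and to an $(n-k)$-IET on $(x_k,c)$, and every periodic or minimal component of $T_{(\lambda,\p)}$ is a component of one of the two restrictions. Since reducibility along a fixed block is a condition on $\pi$ alone, every IET near $T_{(\lambda,\p)}$ with the same signed permutation splits along the same block, so stability and robustness of the components of the two restrictions — granted by the induction hypothesis — pass to $T_{(\lambda,\p)}$, the exceptional null set being the union of the two. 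If $\p$ is irreducible with no flips then $T_{(\lambda,\p)}$ is an oriented IET, hence minimal for Lebesgue-almost every $\lambda$ by the classical theory of oriented interval exchanges; then $(0,c)$ is the unique component, it is minimal, and it is robust because minimality persists for almost every nearby length vector. It remains to treat $\p$ irreducible with at least one flip.

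For such $\p$ I would run Rauzy induction as developed by Nogueira \cite{N2}: pass from $T$ to its first-return map on the subinterval obtained by discarding the shorter of the last interval of $\dom(T)$ and the last interval of $\dom(T^{-1})$, producing data $(\lambda,\p)=(\lambda^{(0)},\p^{(0)}),(\lambda^{(1)},\p^{(1)}),\dots$ with $|\lambda^{(j)}|$ strictly decreasing. Two properties are needed. First, the combinatorics of any fixed number of steps is cut out by finitely many linear inequalities in $\lambda$, hence is locally constant on an open set of full Lebesgue measure. Second, the periodic and minimal components of $T$ are in canonical bijection with those of the $j$-th induced IET — a component of the induced map being recovered for $T$ by completing it with the finitely many pieces of its $T$-orbit lying outside the inducing subinterval — and, once $\lambda$ is restricted to the open set on which the first $j$ steps have fixed combinatorics, this bijection and its inverse depend Lipschitz-continuously on the length data. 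Hence it suffices to prove the theorem for some induced IET, stability and robustness being transported back through this bijection.

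I would then establish that, for Lebesgue-almost every $\lambda$, one of the following occurs and is recognized after finitely many steps: \emph{(I)} some $\p^{(j)}$ is reducible; or \emph{(II)} the induction never stops. In case \emph{(II)} the nested inducing intervals generate a single minimal component equal to $(0,c)$, and the set of nearby length vectors for which \emph{(II)} fails is Lebesgue-null, so this component is robust. In case \emph{(I)}, the induced IET at step $j$ is a reducible $n$-IET, which by the reducible case above splits into two IETs with strictly fewer intervals; a split-off block consisting of a single flipped fixed letter is a one-interval IET with flip whose unique component is periodic with orientation-reversing first-return map — and because the combinatorial event producing this block is open in $\lambda$, this component persists for every sufficiently near length vector, hence is stable; the remaining blocks are handled by the induction hypothesis. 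Since each splitting strictly decreases the number of intervals, the recursion terminates after finitely many stages (consistent with the bound $n_{\rm per}+2\,n_{\rm min}\le n$ of Theorem A), and at the end every periodic component of the original $T$ has been shown stable and every minimal component robust.

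The principal obstacle is the verification of the dichotomy \emph{(I)}/\emph{(II)} for Lebesgue-almost every $\lambda$ together with the accompanying claim that, along the recursion, the Rauzy procedure detects \emph{every} component: this amounts to upgrading Nogueira's statement that almost every IET with flips carries \emph{one} stable periodic trajectory \cite{N2} into a full structural description of the decomposition, and it requires careful control of the degenerate Rauzy steps — those in which an interval is discarded and the permutation loses a letter — so that components are neither created nor destroyed by the induction. A secondary, bookkeeping, difficulty is to produce the single measure-zero set $N$ demanded by the definition of robustness: at each of the finitely many recursion stages one obtains a null set of excluded parameters, and these must be pulled back to the original parameter space — legitimate because each finite segment of Rauzy induction distorts Lebesgue measure by a factor bounded on the relevant open set — after which $N$ is taken to be their (finite, hence null) union.
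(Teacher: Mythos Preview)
Your overall architecture matches the paper's: induction on $n$, dispose of the reducible case by splitting, handle oriented irreducible IETs via Keane, and for irreducible $\p$ with flips run Rauzy induction until the permutation becomes reducible, then split into strictly smaller irreducible pieces and apply the induction hypothesis. The supporting facts you isolate (local constancy in $\lambda$ of finitely many Rauzy steps; bijection of invariant components between $T$ and any Rauzy iterate, respecting stability and robustness) are precisely the paper's Proposition~\ref{abr} and Corollary~\ref{same}.

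The gap is your handling of case~(II). You assert that when Rauzy induction never halts the IET is minimal and that this minimal component is \emph{robust} because ``the set of nearby length vectors for which (II) fails is Lebesgue-null''. For irreducible $\p$ with flips this is false, and indeed the opposite is true: Nogueira's result, stated here as Theorem~\ref{lfu}, says precisely that almost every such IET has \emph{finite} expansion, so case~(I) has full measure and case~(II) is null. Hence in a neighborhood of any case-(II) point almost every length vector lies in case~(I), and your robustness argument collapses. The correct move---and what the paper does---is to invoke Theorem~\ref{lfu} to discard case~(II) outright as a null set, leaving only case~(I), which you already handle correctly. Your concern about ``degenerate Rauzy steps in which an interval is discarded and the permutation loses a letter'' is likewise misplaced: the operator $\overline{\mathcal{R}}$ preserves the number of intervals throughout; the only exceptional events are $\lambda_{\pi^{-1}(n)}=\lambda_n$ (a null set excluded from $\dom(\overline{\mathcal{R}})$) and the permutation landing in $\mathscr{P}_n^{\rm red}$, which is already your case~(I). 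Thus Nogueira's finite-expansion theorem is not something to be ``upgraded'' but is the exact tool needed, as stated.
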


The weaker upper bound $4n-4$ for the number of invariant components of all $n$-IET was proven in \cite[Theorem 14.5.13, p. 475]{KH}.  
For a full measure set of IETs with flip(s), \mbox{Aranson} proved in \cite[Theorem 3, p. 304]{Ara}  that $n_{\rm min}< n/2$ whereas \mbox{Nogueira \cite{N2}}  proved that the number of flipped periodic components belongs to the interval $[1,n]$. Here a periodic component \mbox{$O=\bigcup_{k=0}^{m-1} T^k(J)$} is {\it flipped} if $m$ is even and there exists $x\in J\cap{\rm Dom}\,(T^{\frac{m}{2}})$ such that $\big(T^{\frac{m}{2}}\big)'(x)=-1$.
Every flipped periodic component is stable but not all stable periodic components are flipped. For example, all \mbox{$2$-IET} with permutation $\p=(1,-2)$ have two stable periodic components but only one flipped periodic component. There are non-trivial examples of 4-IETs with irreducible permutation $\p=(-4,3,-2,-1)$ having four stable periodic components, only two of which are flipped.
In order to get the optimal upper bound of \mbox{Theorem A}, it is necessary to carefully control  the
counting arguments.






 
We conclude the article with the following existence result.
 
 \begin{theoremC}\label{222} Let $n\ge 1$ and $k,\ell\ge 0$ be integers. There exists an irreducible $n$-IET with flip(s) having $k$ stable periodic components and $\ell$ robust minimal components if and only if either of the following conditions are satisfied:
 $(k\ge 1, 1\le \ell<n/2\,\,{\rm and}\,\, k+2\,\ell\le n)$ or  $(k=n\,\,{\rm and}\,\,\ell=0)$. 
 \end{theoremC}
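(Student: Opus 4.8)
The plan is to establish the two implications separately: necessity will be read off from Theorems~A and~B together with structural information contained in their proofs, while sufficiency is an explicit construction.

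\smallskip\noindent\emph{Necessity.} Suppose $T=T_{(\lambda,\p)}$ is an irreducible $n$-IET with flip(s) having $k$ stable periodic and $\ell$ robust minimal components. Since a stable periodic (resp.\ robust minimal) component persists throughout a full-measure family of IETs arbitrarily near $T$, and on that family the total numbers of components are, by Theorem~B, exactly $n_{\rm per}$ and $n_{\rm min}$, we have $k\le n_{\rm per}$ and $\ell\le n_{\rm min}$; Theorem~A then yields $k+2\ell\le n$. Because $\p$ has flip(s), $T$ carries at least one stable periodic component — the strengthening of the theorem of \cite{N2} that comes out of the saddle-connection bookkeeping behind Theorem~A — so $k\ge 1$, whence $k+2\ell\le n$ forces $\ell\le (n-1)/2<n/2$. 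It remains to exclude $\ell=0$ with $k<n$: if $\ell=0$ then (every minimal component of an IET with flip(s) being robust, once more a by-product of the proof of Theorem~A) $T$ has no minimal component at all, so $\mathbb{I}_+\cap\mathbb{I}_-$ consists entirely of periodic points; for such a periodic IET with flip(s) the counting of Theorem~A is sharp with $n_{\rm min}=0$, giving $n_{\rm per}=n$, and all its periodic components are stable, hence $k=n$. This leaves exactly the two alternatives in the statement.

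\smallskip\noindent\emph{Sufficiency.} Fix $(n,k,\ell)$ satisfying one of the two conditions. We build $T$ out of elementary blocks: a \emph{periodic block} is the $1$-IET $x\mapsto b-x$ on a single flipped interval, contributing one stable periodic component; a \emph{minimal block} is an oriented $r$-IET ($r\ge 2$) with irreducible permutation and length data chosen so that it is minimal — a minimal oriented IET remaining minimal for Lebesgue-almost every nearby length vector, this contributes one robust minimal component and uses $r$ intervals. If $\ell=0$ and $k=n$, take the all-flipped reversal permutation $\p=(-n,-(n-1),\ldots,-2,-1)$ and a generic length vector. A direct computation gives $T(x)=c-x$ on $(0,c)$; the maximal rigid intervals are then the $2n-1$ components of $(0,c)$ with the $2(n-1)$ points $\{x_1,\dots,x_{n-1}\}\cup\{c-x_1,\dots,c-x_{n-1}\}$ deleted, and since $x\mapsto c-x$ permutes these subintervals with precisely one fixed interval, they assemble into exactly $n$ periodic components. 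All are stable because every IET with this (irreducible) permutation is again $x\mapsto c-x$, with the subintervals moving continuously with $\lambda$.

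\smallskip When $\ell\ge 1$, set $d=n-k-2\ell\ge 0$ and let $R$ be the concatenation of $k$ periodic blocks, $\ell-1$ minimal $2$-IETs and one minimal $(2+d)$-IET; then $R$ is an $n$-IET with flip(s) having exactly $k$ stable periodic and $\ell$ robust minimal components — but with a reducible permutation, and repairing this is the heart of the matter. A naive fix, interchanging the images of two tiny sub-intervals lying at the two ends of the concatenation, does produce an irreducible permutation but is not component-neutral (swapping end-bits of two periodic blocks, for instance, fuses four fixed intervals into a single period-four component, lowering $n_{\rm per}$ by one). The plan is instead to realize $R$ as the first-return map to a sub-interval of the sought irreducible $n$-IET $T$: beginning with $R$, one adds via inverse Rauzy induction the missing floors of a tower one at a time, every step leaving the first-return map — hence the entire decomposition into stable periodic and robust minimal components — unchanged, and one continues until the accumulated combinatorics is irreducible. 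The main obstacle is precisely this bookkeeping: one must show the tower can always be closed to an irreducible $n$-IET in exactly the $n-n'$ available steps ($n'$ being the number of intervals of $R$) without ever altering the component count — equivalently, that the ad hoc end-surgery can always be chosen so that its effect on the two blocks it touches cancels in the invariant decomposition. This is a purely combinatorial question about Rauzy classes of signed permutations, and it is where essentially all the work of Theorem~C lies; the rest reduces to Theorems~A and~B and the elementary block computations above.
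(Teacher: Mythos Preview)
Your proposal has genuine gaps in both directions.

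\smallskip\noindent\textbf{Sufficiency.} For $\ell\ge 1$ you identify the right target --- a reducible $n$-IET $R$ built from $k$ flipped $1$-blocks and $\ell$ oriented minimal blocks --- and the right mechanism --- realise $R$ as a Rauzy induction of the sought irreducible $T$ --- but you explicitly decline to carry this out, writing that the required combinatorics ``is where essentially all the work of Theorem~C lies.'' That is precisely the work the paper does: for each admissible $(k,\ell)$ it exhibits a concrete irreducible $\p\in\mathscr{P}_n^{\rm irred}$ (a reversal permutation with a carefully chosen sign pattern) and checks by hand that $b^{\,n-1}(\p)$ is reducible with exactly the block structure of your $R$. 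Lemma~\ref{lemimp} then furnishes a positive-measure set of length vectors $\lambda$ with $T_{(\lambda,\p)}^{(n-1)}=R$, and Corollary~\ref{same} transfers the component count back. Your sketch is the correct plan, but without exhibiting such $\p$ (or an actual proof that the Rauzy forward set always contains the needed reducible pattern) it is not a proof.

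Your treatment of $(k,\ell)=(n,0)$ via the all-flipped reversal $\p=(-n,\dots,-1)$ is a different construction from the paper's (which uses $\p=(-n,n-1,\dots,2,1)$ and Rauzy induction); yours gives the globally continuous map $x\mapsto c-x$, so under the convention adopted in Section~3 that singular points are genuine discontinuities it collapses to a $1$-IET with a single periodic component. The paper's choice avoids this degeneracy.

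\smallskip\noindent\textbf{Necessity.} You assert that every irreducible IET with flip(s) carries a stable periodic component, calling this ``the strengthening of \cite{N2} that comes out of the saddle-connection bookkeeping behind Theorem~A.'' Theorem~A is a pure counting inequality and yields no such existence statement; Nogueira's theorem is an almost-sure result, and minimal IETs with flips do exist. The same over-claim recurs when you argue $\ell=0\Rightarrow k=n$: that every minimal component is robust is the content of Theorem~B (again almost surely), not a consequence of Theorem~A, and even granting $n_{\rm min}=0$, Theorem~A only gives $n_{\rm per}\le n$, not $n_{\rm per}=n$ --- the claimed ``sharpness'' is exactly what must be proved. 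The paper's argument is structurally different: it restricts from the outset to the full-measure set of typical IETs with finite Rauzy expansion (Theorem~\ref{lfu}), decomposes the reducible $T^{(m)}$ into irreducible pieces $T_{(\boldsymbol{\mu}_j,{\bf v}_j)}$ on strictly fewer intervals, and proves $n_{\rm min}=0\Rightarrow n_{\rm per}=n$ by induction on $n$, using Keane's theorem to rule out oriented irreducible factors of more than one interval. Your argument has no such inductive reduction and cannot close this gap.
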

 
 \section{Counting invariant components}



In this section we will prove Theorem A. Let $T:(0,c)\to (0,c)$ be an $n$-IET and let
\mbox{$0=x_0<x_1<x_2<\ldots<{x_{n-1}}<x_n=c$} be the set of points where $T$ is not defined. We call
$\{x_1,\ldots,x_{n-1}\}$ the  {\it set of  singular points} of $T$ and we refer to $\{0,c\}=\partial [0,c]$ as the {\it set of endpoints} of $T$. 
We may assume that the singular points of $T$ are the discontinuities of $T$, otherwise $T$ could be considered as an IET of less 
intervals.
 We let $w_0^+=\lim_{x\to 0^+} T(x)$ and $w_n^-=\lim_{x\to x_n^-} T(x)$ be the one-sided limits of $T$ at its endpoints.
 Similarly, for all $j\in\{1,\ldots,n-1\}$,  set $w_j^+=\lim_{x\to x_j^+} T(x)$
 and $w_j^-=\lim_{x\to x_j^-} T(x)$.

By the above, we may
define the orbit of $0$ and of each singular point by continuity from the right.  
Similarly we define 
the orbit of $c$ and of each singular point by continuity from the left.  If one of these orbit 
continuations hits an endpoint  or a singular point we call it a {\em
saddle-connection}. In this way, $\gamma$ is a saddle-connection
if there exist $j,r\in\{0,1,\ldots,n\}$ and an integer $k\ge 0$ such that
$\gamma=\{x_j,T(w_j),\ldots,T^{k-1}(w_j),T^k(w_j)=x_r\}$, where 
$w_0=w_0^+$, $w_n=w_n^-$ and $w_j\in\{w_j^{+},w_j^{-}\}$ for
$1\le j\le n-1$. We always assume  $k\ge 0$ to be the smallest possible so that
$\gamma\cap\{x_i\}_{i=0}^n=\{x_j,x_r\}$. In this case, we say that $\gamma$ {\it starts} at $x_j$ and {\it ends} at
$x_r$.
It may happen that $x_j=x_r$ and
$\gamma=\{x_j\}$, in which case $k=0$ and $j=r$.

Accordingly with this definition, every IET   has at
least two saddle-connections, each of which ends at an endpoint $x\in\{0,c\}$ of $T$. These saddle-connections are called the {\it trivial saddle-connections} of $T$.

We say that a non-empty open set $O\subset (0,c)$ is a {\it transitive component} of $T$ if there exists $x\in\mathbb{I}$ such that \mbox{$O={\rm int}\,(\overline{O(x)})$}. 

\begin{lemma}\label{minvtran} $O$ is a minimal component if and only if $O$ is a transitive component.
\end{lemma}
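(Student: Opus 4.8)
The plan is to prove the two implications separately; $(\Rightarrow)$ is routine and $(\Leftarrow)$ carries all the content. For $(\Rightarrow)$: the set $(0,c)\setminus\mathbb{I}$ is contained in $\bigcup_{k\in\Z}\bigcup_{i}\{z:T^{k}(z)=x_{i}\}$, which is countable because each iterate $T^{k}$ is injective on its domain; hence $O\cap\mathbb{I}$ is nonempty (indeed dense in $O$). Choosing $x\in O\cap\mathbb{I}\subset O\cap(\mathbb{I}_{-}\cup\mathbb{I}_{+})$, the definition of a minimal component gives $O=\mathrm{int}\,(\overline{O(x)})$ with $x\in\mathbb{I}$, which is exactly what it means for $O$ to be a transitive component.

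For $(\Leftarrow)$, fix $x\in\mathbb{I}$ with $O=\mathrm{int}\,(\overline{O(x)})$ and record three preliminary facts. (i) $O(x)$ is dense in $O$ and $O\subset\overline{O(x)}$, so $\mathrm{int}\,(\overline{O})=O$. (ii) Since $T$ and $T^{-1}$ are local isometries, hence open maps, on their domains, and a continuity argument shows each carries $\overline{O(x)}$ into itself, we get $T^{\pm1}\big(O\cap\dom\,(T^{\pm1})\big)\subset O$; consequently $O(z)\subset O$ and $\overline{O(z)}\subset\overline{O}$ whenever $z\in O$. (iii) $O$ contains no periodic point: if $p\in O$ were periodic, then $T$ is a local isometry near $p$, so some power $T^{N}$ equals the identity on a subinterval $I\ni p$ with $I\subset O$; thus $I$ consists of periodic points, the dense orbit $O(x)$ meets $I$, and hence $x$ — being a preimage under some $T^{k}$ of a periodic point of $I$, the preimage being defined since $x\in\mathbb{I}$ — is itself periodic; then $O(x)$ is finite and $O=\mathrm{int}\,(\overline{O(x)})=\emptyset$, contradicting that $O$ is nonempty.

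Now let $y\in O\cap(\mathbb{I}_{-}\cup\mathbb{I}_{+})$; replacing $T$ by $T^{-1}$ if necessary (which affects neither $\mathbb{I}$, nor $O(x)$, nor the notion of component) we may assume $y\in\mathbb{I}_{+}$. By (iii) the forward orbit of $y$ is infinite, so its $\omega$-limit set $\omega(y)$ is a nonempty closed $T^{\pm1}$-invariant subset of $\overline{O}$ with $\omega(y)\subset\overline{O(y)}$. Using Zorn's lemma and compactness, choose a minimal nonempty closed $T^{\pm1}$-invariant set $K\subset\omega(y)$. By the structure theory of interval exchange transformations — their decomposition into periodic and minimal components \cite{Ara,KH} — the set $K$ is either a finite periodic orbit or has nonempty interior. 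The first case is impossible: a finite periodic orbit lies inside its open periodic component $P$, so $\omega(y)$, and therefore the forward orbit of $y$ (as $\omega(y)\subset\overline{O(y)}$ while $P$ is open), meets $P$; but $y\in O$ and $O$ is invariant by (ii), so this would place a periodic point in $O$, contradicting (iii). Hence $\mathrm{int}\,(K)\neq\emptyset$. Since $K\subset\overline{O}$, fact (i) gives $\mathrm{int}\,(K)\subset\mathrm{int}\,(\overline{O})=O$, so the dense orbit $O(x)$ meets the nonempty open set $\mathrm{int}\,(K)$; as $K$ is $T^{\pm1}$-invariant and $x\in\mathbb{I}$, this forces $O(x)\subset K$, whence $O=\mathrm{int}\,(\overline{O(x)})\subset\mathrm{int}\,(K)\subset O$. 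Thus $\mathrm{int}\,(K)=O$, and minimality of $K$ gives $K=\overline{\mathrm{int}\,(K)}=\overline{O}$. Finally $\overline{O}=K\subset\omega(y)\subset\overline{O(y)}\subset\overline{O}$, so $\overline{O(y)}=\overline{O}$ and $\mathrm{int}\,(\overline{O(y)})=\mathrm{int}\,(\overline{O})=O$, as required.

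The decisive point — and the one I expect to cost the most care — is the dichotomy for $K$: that an infinite minimal invariant set of an IET must have nonempty interior. This is precisely where one must appeal to the structure theory rather than to soft topological dynamics, since a transitive map need not be minimal in general. Beyond invoking that theory, the remaining thing to check is that $K$ cannot collapse to a single singular point: were $\omega(y)=\{x_{i}\}$, then $T^{k}(y)\to x_{i}$, and examining the branch of $T$ adjacent to $x_{i}$ shows this forces either a neighbouring branch of $T$ to be the identity — producing a periodic point in $O$, which is excluded — or a contradiction with $x_{i}$ being a genuine discontinuity of $T$.
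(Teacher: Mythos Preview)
Your $(\Rightarrow)$ is fine and even spells out a point the paper leaves implicit (that $O\cap\mathbb{I}\ne\emptyset$, via countability of the singular orbits). For $(\Leftarrow)$, however, the paper's proof is a single citation: it invokes \cite[Proposition~14.5.9]{KH}, which asserts directly that in a transitive set of an IET every infinite semi-orbit is dense; the lemma is then immediate. Your route --- passing to $\omega(y)$, extracting a minimal closed invariant set $K$ via Zorn, ruling out the periodic case, and then showing $\mathrm{int}\,(K)=O$ --- is considerably longer, and at the decisive step (the dichotomy ``$K$ is a finite periodic orbit or $K$ has nonempty interior'') you yourself appeal to ``the structure theory of interval exchange transformations \cite{Ara,KH}''. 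So the extra machinery does not make the argument more self-contained: once you are willing to quote Katok--Hasselblatt, the lemma is one line, and your detour through minimal sets is redundant.

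There is also a soft spot you acknowledge but do not close. The dichotomy for $K$ does not follow from the component decomposition alone: a priori $K$ could be a finite subset of the saddle-connection set $\bigcup_i\partial O_i$ that is not a periodic orbit, since $T$ is undefined at singular points and ``$T^{\pm1}$-invariance'' is delicate there. Your final paragraph treats only the special case $\omega(y)=\{x_i\}$, and even that only in sketch. Filling this in properly amounts to reproving the content of the cited Katok--Hasselblatt proposition, so the shortest honest fix is to cite it directly, as the paper does.
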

\begin{proof} It is immediate that every minimal component is also a transitive component. The converse follows from \cite[Proposition 14.5.9, p. 473]{KH}.
\end{proof}

\begin{theorem}[Katok--Hasselblatt \cite{KH}]\label{KatHa} Let $T:(0,c)\to (0,c)$ be an $n$-IET. Then the following holds:
\item [(KH1)] There exist finitely many disjoint open sets $O_1$, \ldots, $O_N$, each of which consists of a finite union of (disjoint) open intervals, such that $[0,c]=\bigcup_{i=1}^N \overline{O_i}$. Each set $O_i$ is either a periodic component or a minimal component;
\item [(KH2)] If $O_i$ is a minimal component then $O_i$ is a finite union of (disjoint) open intervals with disjoint closures, in particular ${\rm int}\,(\overline{O_i})=O_i$;
\item [(KH3)]  $T(O_i\cap\mathbb{I})\subset O_i\cap\mathbb{I}$;
\item [(KH4)] The endpoints $\partial O_i$ of an invariant component $($periodic or minimal$)$ belong to saddle-connections.
\end{theorem}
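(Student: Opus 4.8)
The plan is to build the decomposition directly out of orbit closures and then to control it using the orbit-segments issuing from the singular points; for the analytic facts about orbit closures I would borrow from \cite{KH}, as was already done for Lemma~\ref{minvtran}. First I would note that $(0,c)\setminus\mathbb{I}$ is countable --- a point drops out of $\mathbb{I}_+$ (resp.\ $\mathbb{I}_-$) only when some forward (resp.\ backward) iterate of it meets one of the finitely many singular points (resp.\ one of their one-sided images), and since $T,T^{-1}$ are partially defined injections each such preimage is a single point --- so $\mathbb{I}$ is co-countable and dense. For $x\in\mathbb{I}$ the bi-infinite orbit $O(x)$ is defined, and by continuity of $T$ and $T^{-1}$ on their domains the set $\overline{O(x)}$ is invariant in the sense $T\big(\overline{O(x)}\cap{\rm Dom}\,(T)\big)\subseteq\overline{O(x)}$, and likewise for $T^{-1}$. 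If $O(x)$ is finite then $x$ is periodic, and I would check $x$ lies in the interior of a maximal rigid interval: with $T^m x=x$, a short enough interval $J\ni x$ has its first $m$ images disjoint from the singular set, so $T^m$ --- or $T^{2m}$ if $T^m$ reverses orientation at $x$ --- is the identity on $J$, placing $x$ in the periodic component $\bigcup_{k=0}^{m-1}T^k(J)$, a finite union of open intervals by construction. If $O(x)$ is infinite, set $C_x={\rm int}\,(\overline{O(x)})$.

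The engine for the infinite-orbit pieces is the regularity statement that the closure of an infinite orbit of an IET equals the closure of its interior, $\overline{O(x)}=\overline{C_x}$, and that every point of $C_x\cap\mathbb{I}$ has orbit dense in $C_x$; this is \cite[Proposition 14.5.9]{KH}, the very input behind Lemma~\ref{minvtran}, and I would quote it. It gives $x\in C_x\neq\emptyset$, so $C_x$ is a transitive and hence (Lemma~\ref{minvtran}) minimal component; and if $C_x\cap C_y\neq\emptyset$ then, picking $z\in\mathbb{I}$ in the intersection (such $z$ are co-countable in $C_x$), invariance of $\overline{O(x)}$ gives $O(z)\subseteq\overline{O(x)}$, hence $\overline{O(z)}\subseteq\overline{O(x)}$, while density gives $\overline{O(z)}\supseteq\overline{C_x}=\overline{O(x)}$; thus $\overline{O(z)}=\overline{O(x)}$ and symmetrically $=\overline{O(y)}$, so $C_x=C_y$ on taking interiors. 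Hence the distinct sets $C_x$ together with the periodic components form pairwise disjoint open sets whose union is all of $(0,c)$ save for a countable set and the boundary points shared by two of them. Property (KH3) follows by intersecting $T\big(\overline{O(x)}\cap{\rm Dom}\,(T)\big)\subseteq\overline{O(x)}$ with $\mathbb{I}$ and taking interiors. This establishes (KH1) except for finiteness.

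For finiteness and (KH4) I would analyse $\partial O_i$. The core case is clean: if $p\in\partial O_i$ is neither a singular point, an endpoint, nor a point of $(0,c)\setminus\mathbb{I}$, then $p\in\mathbb{I}$, and by invariance $O(p)\subseteq\overline{O_i}$, so ${\rm int}\,(\overline{O(p)})\subseteq{\rm int}\,(\overline{O_i})=O_i$; applying the dichotomy of the previous step to the overlapping pieces ${\rm int}\,(\overline{O(p)})$ and $O_i$ forces ${\rm int}\,(\overline{O(p)})=O_i$, hence $\overline{O(p)}=\overline{O_i}$; the orbit of $p$ is therefore dense in $\overline{O_i}$ and meets the open set $O_i$, say $T^k(p)\in O_i$, and pulling this back by the local homeomorphism $T^{-k}$ at $T^k(p)$ --- available since each $T^{\pm j}$ is a local homeomorphism at every point of $O(p)\subseteq\mathbb{I}$ --- exhibits an open neighbourhood of $p$ contained in $T^{-k}(\overline{O_i})\subseteq\overline{O_i}$, hence in ${\rm int}\,(\overline{O_i})=O_i$, contradicting $p\in\partial O_i$. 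So every such $p$ lies on a saddle-connection. The remaining boundary points --- singular points, endpoints, and points whose orbit meets the singular set --- are comparatively few and must be pinned down by following one-sided continuations, and this is the finitary bookkeeping I would model on \cite{KH}; it yields that $\bigcup_i\partial O_i$ lies on the saddle-connections of $T$, of which there are finitely many (each is determined by its initial separatrix, and there are at most $2(n-1)+2=2n$ separatrices). Hence $\bigcup_i\partial O_i$ is finite, each $O_i$ is a finite union of open intervals and there are finitely many of them, and --- since $\bigcup_iO_i$ omits only a set with empty interior --- taking closures (a finite union) gives $[0,c]=\bigcup_i\overline{O_i}$, completing (KH1) and (KH4). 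Finally, a minimal $O_i=C_x$ is the interior of a closure, hence regular open with ${\rm int}\,(\overline{O_i})=O_i$; were two distinct connected components $I_1\neq I_2$ of the open set $O_i$ to share an endpoint $p$, a two-sided neighbourhood of $p$ would lie in $\overline{O_i}$, hence in ${\rm int}\,(\overline{O_i})=O_i$, making $I_1\cup\{p\}\cup I_2$ connected in $O_i$ --- absurd; so the finitely many intervals of $O_i$ have pairwise disjoint closures, which is (KH2).

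I expect the main obstacle to be the finitary control of $\bigcup_i\partial O_i$, that is, showing that the orbit-segments emanating from the $2n$ separatrices capture every boundary point of every component: the generic case above is clean, but boundary points that are themselves singular, or that lie on $(0,c)\setminus\mathbb{I}$, must be tracked with care, especially in the presence of flips, and this is precisely where I would follow \cite{KH}; it is also the counting that Theorem~A later sharpens. A more self-contained route, inducting on $n$ by splitting $[0,c]$ along a non-trivial saddle-connection (or recognising $T$ as minimal when there is none), is available but essentially repackages the same difficulty.
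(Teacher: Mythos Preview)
Your approach is considerably more hands-on than the paper's. The paper simply cites \cite{KH} for (KH1), (KH2) and (KH4) --- Theorems 14.5.13 and 14.5.10 and Lemma 14.5.4/Corollary 14.5.9 respectively --- and only supplies an argument for (KH3): for $x\in O_i\cap\mathbb{I}$ (minimal case) one approximates $x$ by iterates of a dense orbit, pushes forward by $T$, lands in $\overline{O_i}\cap\mathbb{I}$, and then invokes (KH4) to conclude $\partial O_i\cap\mathbb{I}=\emptyset$, so in fact $T(x)\in O_i\cap\mathbb{I}$. Your route for (KH3), via regular openness of $C_x$ and the fact that $T$ is an open map on each domain interval, is a legitimate alternative and arguably cleaner, since it does not presuppose (KH4).

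The main body of your proposal --- building the components as orbit-closure interiors, disjointness, and the (KH2) regularity --- is sound and is essentially an expanded version of what \cite{KH} does. Two small points are worth flagging. First, your boundary-point contradiction (``if $p\in\partial O_i\cap\mathbb{I}$ \dots'') uses both ${\rm int}(\overline{O_i})=O_i$ and density of $O(p)$ in $O_i$; these hold when $O_i$ is minimal, but if $O_i$ is periodic then ${\rm int}(\overline{O(p)})=\emptyset$ and the argument as written does not go through. The periodic case needs the (easier) observation that $p\in\mathbb{I}$ periodic lies in the interior of some periodic component, which cannot be $O_i$ nor disjoint from $\overline{O_i}$. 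Second, as you yourself note, the step from ``$p\notin\mathbb{I}$'' to ``$p$ lies on a saddle-connection'' is the real content of (KH4): a point with only one finite semi-orbit lies on a separatrix, not a priori on a saddle-connection, and closing this up is exactly the finitary bookkeeping you defer to \cite{KH}. So in the end both approaches rest on the same reference for the hard item; yours unpacks more of the soft structure explicitly, which is instructive but not strictly needed for the paper's purposes.
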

\begin{proof}  The items (KH1) and (KH2) follow  from   \cite [Theorems 14.5.13 and 14.5.10]{KH} 
respectively, while (KH4) follows from  \cite [Lemma 14.5.4 and Corollary 14.5.9]{KH}. It remains to prove (KH3). If $O_i$ is periodic we have $T(O_i)=O_i$ and $O_i\cap
\mathbb{I}=O_i$, so in this case (KH3) is automatic. Now assume that $O_i$ is minimal and let
$x_0\in\mathbb{I}$ be such that $O(x_0)=\{T^k(x_0)\mid k\in\Z\}$ is dense in $O_i$. For each $x\in O_i\cap\mathbb{I}$, there exists a sequence $\{n_j\}_{j=0}^\infty$ with $\vert n_j\vert\to+\infty$ as $j\to\infty$ such that $x=\lim_{j\to\infty} T^{n_j}(x_0)$. In this way, $T(x)=\lim_{j\to\infty} T^{n_j+1}(x_0)
\subset \overline{O(x_0)}=\overline{O_i}$. Thus $T(O_i\cap\mathbb{I})\subset \overline{O_i}\cap \mathbb{I}$. Because $O_i$ is union of finitely many intervals, $\overline{O_i}=O_i\cup\partial O_i$. By (KH4),
$\partial O_i\cap \mathbb{I}=\emptyset$. Therefore, by the above, $T(O_i\cap\mathbb{I})\subset O_i\cap\mathbb{I}$.
\end{proof}


\begin{lemma}\label{3.2}  If $O_i$ is a minimal component then $O_i$ contains a singular point.
\end{lemma}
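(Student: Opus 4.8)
The plan is to argue by contradiction: suppose $O_i$ is a minimal component containing no singular point of $T$, and produce a finite orbit inside $O_i$, which contradicts the definition of a minimal component. Since $O_i\subset(0,c)$ contains none of the points $x_1,\dots,x_{n-1}$, it lies in $\mathrm{Dom}\,(T)$; moreover each connected component of $O_i$ lies inside a single continuity interval $(x_{l-1},x_l)$, so $T$ is continuous on $O_i$ and is an isometry on each component. By (KH2) I write $O_i=\bigsqcup_{j=1}^{s}I_j$, a finite disjoint union of open intervals with pairwise disjoint closures, and for each $j$ I let $\bar T$ denote the isometric extension of $T|_{I_j}$ to $\overline{I_j}$.

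The crux of the argument is to show that $T$ maps $O_i$ bijectively onto itself, and in fact permutes the intervals $I_1,\dots,I_s$. First I note that $O_i\cap\mathbb I$ is dense in $O_i$: a point of $O_i$ can fail to lie in $\mathbb I$ only if some iterate of it hits a singular point, and for each $k$ there is at most one such point since $T^{k}$ is injective on its domain; hence $O_i\setminus\mathbb I$ is countable. By (KH3) we have $T(O_i\cap\mathbb I)\subset O_i\cap\mathbb I$, so by density and continuity $T(O_i)\subset\overline{O_i}$; since $T(O_i)=\bigcup_j T(I_j)$ is open, this gives $T(O_i)\subset\mathrm{int}\,(\overline{O_i})=O_i$ by (KH2). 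Now $T(O_i)=\bigsqcup_j T(I_j)$ is a disjoint union of $s$ open intervals (global injectivity of $T$), each $T(I_j)$ has length $|I_j|$ (isometry) and, being connected and contained in $O_i$, lies inside a single component $I_{\sigma(j)}$. Comparing, for each $k$, the total length of $\{T(I_j):\sigma(j)=k\}$ with $|I_k|$ and summing over $k$ forces $\sigma$ to be a bijection of $\{1,\dots,s\}$ and $T(I_j)=I_{\sigma(j)}$ for every $j$. Hence $T(O_i)=O_i$, so $T$ restricts to a homeomorphism of $O_i$; in particular every point of $O_i$ has a bi-infinite orbit contained in $O_i$, so $O_i\subset\mathbb I$, and $\bar T$ extends to the homeomorphism of $\overline{O_i}$ that sends $\overline{I_j}$ isometrically onto $\overline{I_{\sigma(j)}}$.

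To conclude I pick a cycle $(j_1\,j_2\,\cdots\,j_r)$ of the permutation $\sigma$. Then $\bar T^{\,r}$ maps the compact interval $\overline{I_{j_1}}=[a,b]$ isometrically onto itself, so it is either the identity or the reflection $t\mapsto a+b-t$. In the first case every $q\in I_{j_1}$ satisfies $T^{r}(q)=q$; in the second case the midpoint $q=(a+b)/2\in I_{j_1}$ satisfies $T^{r}(q)=q$. In either case $q\in O_i\subset\mathbb I$ and its orbit $O(q)=\{q,T(q),\dots,T^{r-1}(q)\}$ is finite, hence $\mathrm{int}\,(\overline{O(q)})=\emptyset\ne O_i$, contradicting the fact that $O_i$ is a minimal component. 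Therefore $O_i$ must contain a singular point.

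The main obstacle is the middle step: upgrading the easy inclusion $T(O_i)\subset O_i$ to the statement that $T$ permutes the components $I_j$ exactly. This is where the hypothesis that $O_i$ contains no singular point is essential — it makes $T$ a genuine isometry on each $I_j$, so Lebesgue measure is preserved — and it must be combined with injectivity of $T$ and with the disjointness of the closures $\overline{I_j}$ furnished by (KH2). Once $T|_{O_i}$ is seen to be an isometric homeomorphism of a finite disjoint union of intervals, the production of a periodic point and the contradiction with minimality are routine.
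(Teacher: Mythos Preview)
Your proof is correct and follows essentially the same strategy as the paper's: argue by contradiction, use (KH3) together with continuity and (KH2) to obtain $T(O_i)\subset O_i$, then deduce that $T$ permutes the finitely many interval components $I_j$, and conclude that some iterate restricts to the identity (or a reflection) on one of the $I_j$'s, producing a periodic orbit and contradicting minimality. The paper is terser---it jumps directly from $T(O_i)\subset O_i$ to the permutation structure and asserts that $T^M$ or $T^{2M}$ is the identity---whereas you make explicit the density of $O_i\cap\mathbb I$ in $O_i$, the length-counting argument forcing $\sigma$ to be a bijection with $T(I_j)=I_{\sigma(j)}$, and the extraction of a single periodic point from a cycle of $\sigma$; but these are elaborations of the same idea rather than a different route.
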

\begin{proof}  
Suppose that $O_i$ contains no singular points. Then
$O_i\cap {\rm Dom}\,(T)=O_i$. Because $T\vert_{{\rm Dom}\,(T)}$ is an open map, we have that
$T(O_i)$ is open and so ${\rm int}\,(T(O_i))=T(O_i)$. It follows from (KH3) and from the continuity of $T$ that $T(O_i)\subset \overline{O_i}$. Hence, by (KH2)
\begin{equation}\label{TOI}
T(O_i)={\rm int}\,(  T(O_i) )\subset {\rm int}\,(\overline{O_i})=O_i.
\end{equation}
Finally, by (KH1) and by (\ref{TOI}), there exist finitely many disjoint open intervals $I_1,\ldots, I_M$
and 
 a permutation $\alpha\in {\mathscr P}_M$ such that $O_i=\bigcup_{j=1}^M I_j$, 
  $T^k(I_1)=I_{\alpha(k)}$ for all $k\in\{1,\ldots,M\}$ and $T^M(I_1)=I_1$. Now, either $T^M$ or $T^{2M}$ is the identity map. In particular, $O_i$ is a periodic component, which contradicts the hypothesis. Therefore, $O_i$ contains a singular point.
\end{proof}

\begin{lemma}\label{3.3}  If $\overline{O_i}\cap \overline{O_j}\neq\emptyset$ for some $i\neq j$ then  $\partial {O_i}\cap \partial {O_j}$ contains a singular point.

\end{lemma}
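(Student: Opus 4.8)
The plan is to reduce the assertion to producing a singular point inside $\partial O_i\cap\partial O_j$, and then to obtain one by pushing a common boundary point forward under $T$ until it is forced onto a singular point. First note that, since $O_i$ and $O_j$ are disjoint open sets, a point of $\overline{O_i}\cap\overline{O_j}$ can lie neither in $O_i$ nor in $O_j$, so $\overline{O_i}\cap\overline{O_j}=\partial O_i\cap\partial O_j$; in particular this set is nonempty. It also contains no endpoint: if $0\in\overline{O_k}$ then $O_k$ contains an interval $(0,\epsilon)$, and two disjoint components cannot both do this, so each of $0$ and $c$ lies in the closure of a single component. Hence every $x\in\partial O_i\cap\partial O_j$ satisfies $x\in(0,c)$.

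Fix such an $x$. Each of $O_i,O_j$ is a finite union of open intervals --- by (KH2) for a minimal component, and by construction for a periodic component $\bigcup_{k}T^k(J)$ --- so $\partial O_i$ and $\partial O_j$ are finite, and $x$ is an endpoint of one of the intervals constituting $O_i$ and of one constituting $O_j$. Using the disjointness $O_i\cap O_j=\emptyset$ one checks that, after possibly interchanging $i$ and $j$, there is $\eta>0$ with $(x-\eta,x)\subset O_i$, $(x,x+\eta)\subset O_j$, and $x\notin O_i\cup O_j$; call this the ``sandwich picture'' at $x$.

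Now suppose, for contradiction, that $\partial O_i\cap\partial O_j$ contains no singular point, and let $x$ have the sandwich picture above. Since $x$ is not singular, $T$ restricts to an isometry on a small interval around $x$, hence maps a small two-sided neighbourhood of $x$ isometrically onto one of $T(x)$. I claim this carries the $O_i$-side of $x$ into $O_i$ and the $O_j$-side into $O_j$. Indeed, $\mathbb{I}=\mathbb{I}_+\cap\mathbb{I}_-$ is co-countable (its complement is a countable union of backward orbits of singular points), so $D:=(x-\eta,x)\cap\mathbb{I}$ is dense in $(x-\eta,x)$ and $D\subset O_i\cap\mathbb{I}$; by (KH3), $T(D)\subset O_i\cap\mathbb{I}$; taking closures gives $T\big((x-\eta,x)\big)\subset\overline{O_i}$, and since this image is open, $T\big((x-\eta,x)\big)\subset{\rm int}\,(\overline{O_i})=O_i$ --- the last equality holding by (KH2) when $O_i$ is minimal and because $T(O_i)=O_i$ when $O_i$ is periodic. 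The symmetric argument handles the $O_j$-side. Consequently $T(x)$ again has the sandwich picture for the pair $(O_i,O_j)$, so $T(x)\in\partial O_i\cap\partial O_j$, and $T(x)\in(0,c)$. By the contradiction hypothesis $T(x)$ is again not singular, so the step iterates: the whole forward orbit $\{T^k(x):k\ge 0\}$ lies in the finite set $\partial O_i\cap\partial O_j$.

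A forward orbit confined to a finite set is eventually periodic; since none of its points is singular, $T$ is injective along it, and one descends to $T^p(x)=x$ for some $p\ge 1$. A periodic point has a full two-sided orbit, so $x\in\mathbb{I}$. On the other hand $x\in\partial O_i$, which by (KH4) lies on a saddle-connection, and no point of a saddle-connection belongs to $\mathbb{I}$: its forward orbit reaches a singular point or an endpoint, so it is not even in $\mathbb{I}_+$. Thus $x\notin\mathbb{I}$, a contradiction, and $\partial O_i\cap\partial O_j$ must contain a singular point. The delicate point is the propagation claim in the previous paragraph, i.e.\ that $T$ preserves the sandwich picture; this is where (KH3), the identity ${\rm int}\,(\overline{O_i})=O_i$ (argued separately for periodic and minimal components), and the co-countability of $\mathbb{I}$ all enter. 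The remaining steps are elementary bookkeeping about finite unions of intervals.
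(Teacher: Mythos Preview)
Your proof is correct, and it rests on the same two ingredients as the paper's: invariance of $\overline{O_i}\cap\overline{O_j}=\partial O_i\cap\partial O_j$ under $T$ (via (KH3) and continuity), and (KH4) to tie boundary points to saddle-connections. The difference is structural. The paper argues directly: given $x\in\partial O_i\cap\partial O_j$, (KH4) supplies a $k$ with $T^k(x)$ singular or an endpoint, and the invariance $T^k\big(\overline{O_i}\cap\overline{O_j}\cap{\rm Dom}(T^k)\big)\subset\partial O_i\cap\partial O_j$ places that point where it is wanted (an endpoint being ruled out just as you do). You instead run a contradiction: assume no singular point lies in the intersection, propagate the sandwich picture to trap the whole forward orbit in the finite set $\partial O_i\cap\partial O_j$, extract a genuinely periodic orbit by injectivity, deduce $x\in\mathbb{I}$, and only then invoke (KH4) to get $x\notin\mathbb{I}$. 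This works, but the periodicity detour is unnecessary: once you have the invariance step, (KH4) already hands you the singular iterate outright, as the paper does in two lines.

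One expository wrinkle: your claim that ${\rm int}(\overline{O_i})=O_i$ holds for a periodic component ``because $T(O_i)=O_i$'' is a non-sequitur --- that equality can fail if two of the intervals $T^k(J)$ abut. However, the conclusion you actually need, $T\big((x-\eta,x)\big)\subset O_i$, is immediate in the periodic case from $(x-\eta,x)\subset O_i$ and $T(O_i)=O_i$, without passing through closures at all; so the argument survives once the justification is rerouted.
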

\begin{proof} It follows from (KH3) and from the continuity of $T^k\vert_{{\rm Dom}\,(T^k)}$ that $T^k(\overline{O_i}\cap {\rm Dom}\,(T^k))\subset \overline{O_i}$ and $T^k(\overline{O_j}\cap {\rm Dom}\,(T^k))\subset \overline{O_j}$
for all $k\in\N$. In particular, $$T^k\big( \overline{O_i}\cap \overline{O_j}\cap {\rm Dom}\,(T^k)\big)\subset \overline{O_i}\cap\overline{O_j}\subset \partial {O_i}\cap\partial {O_j}.$$ By (KH4), each point in the boundary of an invariant component belongs to a saddle--connection. Hence, if $x\in \overline{O_i}\cap \overline{O_j}$, by the above there exists $k\in\N$  such that $T^k(x)$ is a singular point or an endpoint of $T$. By the above, $T^k(x)\in\partial {O_i}\cap\partial {O_j}$. It is easy to see that $T^k(x)$ cannot be an endpoint, so it has to be a singular point.
\end{proof}

\noindent{\it Proof of Theorem A}. Let $T:(0,c)\to (0,c)$ be an $n$-IET.    Let $O_1$, $O_2$, \ldots, $O_N$ be the invariant 
components of $T$. We denote
by $n_{\rm per}$ the number of periodic components and by $n_{\rm min}$ the number of minimal components. In this way, $n_{\rm per}+n_{\rm min}=N$. 

For each $1 \le i \le N$, let $y_i :=\inf O_i$. By (KH1), the numbers $y_1,\ldots,y_N$ are pairwise distinct, thus we can relabel the $O_i$'s so that $0=y_1<y_2<\ldots<y_N<c$.
We claim that there exists an injective map $\beta:\{1,2,\ldots,N\}\to\{0,1,\ldots,n-1\}$ that associates to each invariant component $O_{i}$, $i\ge 2$, a singular point $x_{\beta(i)}\in\partial O_{i}$ and satisfies $\beta(1)=0$. By the definition of $\{y_i\}_{i=1}^N$, 
 for all $i\in\{2,\ldots,N\}$ there exists
$1\le j<i$ such that $y_i\in\partial {O_i}\cap \partial {O_j}$. By Lemma \ref{3.3},   $\partial {O_i}\cap \partial {O_j}$ contains a singular point $x_{\beta(i)}$.
It remains to prove that $\beta$
is injective. Suppose that $\beta(i_1)=\beta(i_2)$ for some
$2\le i_1<i_2\le N$. Let $z=x_{\beta(i_1)}=x_{\beta(i_2)}$. By the above, there exist $1\le j_1<i_1$ and $1\le j_2<i_2$
such that $z\in \partial O_{i_1}\cap\partial O_{j_1}\cap \partial O_{i_2}\cap \partial O_{j_2}$. As $j_1<i_1<i_2$, the point $z$ belongs to the boundary of three different invariant components, which is not possible. Hence, $\beta$ is injective. This together with Lemma \ref{3.2} allows us to count the number of singular points necessary for having $N$ invariant components. There are two cases to consider (i) $O_{1}$ is a periodic component or (ii) $O_{1}$ is a minimal component. In case (i), we need two singular points for each minimal component and one singular point for each periodic component different from $O_{1}$. Hence, $2\, n_{\rm min}+n_{\rm per}-1\le n-1$, that is to say, $n_{\rm per}+2 \,n_{\rm min}\le n$. In case (ii), we need one singular point for each periodic component, two singular points for each minimal component different from $O_{1}$ and one singular point for $O_{1}$. Thus $2(n_{\rm min}-1)+1+n_{\rm per}\le n-1$, that is, $n_{\rm per}+2 \,n_{\rm min}\le n$.
\cqd

\section{Rauzy induction}



We shall denote by $\Lambda_n$ the subset of $\R^n$ formed by the length vectors
$$\Lambda_n=\{\lambda=(\lambda_1,\ldots,\lambda_n)\mid\lambda_i>0,\,\forall i\}.$$
For $\lambda\in\Lambda_n$  set
$$ \Vert \lambda  \Vert=\vert \lambda_1\vert+\vert\lambda_2\vert+\cdots+\vert \lambda_n\vert.$$
We say that $\lambda \in\Lambda_n$ is {\em irrational} if
the numbers $\lambda_1,\ldots,\lambda_n$ are rationally independent. We denote by $\Lambda_n^*$ the subset of $\Lambda_n$ formed by the  irrational length vectors. We say that an $n$-IET is {\it irrational} if its length vector is irrational.





We will denote by ${\mathscr P}_n^*$ the following class of signed permutations on $n$ symbols
$$
{\mathscr P}_n^*=\{\p\in{\mathscr P}_n : \vert \p(n)\vert\neq n\}.
$$
We remark that ${\mathscr P}_n^*$ contains the irreducible signed permutations. Thus we have the following inclusions
$${\mathscr P}_n^{\rm irred}\subset{\mathscr P}_n^{*}\subset {\mathscr P}_n.$$

In what follows, given $\p\in{\mathscr P}_n$ we let $\pi=\vert\p\vert=(\vert p_1\vert,\ldots,\vert p_n\vert)$ and $\theta=(p_1/\vert p_1\vert,\ldots,p_n/\vert p_n\vert)$.





\subsection{Rauzy induction}

The \emph{Rauzy induction}
is the operator $\mathcal{R}$ on the space of   $n$-IETs that associates to
$T=T_{(\lambda,\p)}$ with $\p \in {\mathscr P}_n^*$ the IET \mbox{$T_{(\lambda',\p')}=\mathcal{R}(T_{(\lambda,\p)})$} which is the
 first return map induced by $T$ on the
subinterval $I(\lambda,\p)=[0,\xi]$, where
\begin{eqnarray*}
\xi=\left\{
\begin{array}{ll}
\Vert \lambda \Vert-\lambda_{ \pi^{-1}(n)}, &\text{if}\ \lambda_{ \pi^{-1}(n)}<\lambda_n\\
\Vert \lambda \Vert-\lambda_n,              &\text{if}\ \lambda_{ \pi^{-1}(n)}>\lambda_n\\
\end{array}
\right..
\end{eqnarray*}
The Rauzy operator $T_{(\lambda,\p)}\mapsto T_{(\lambda',\p')}$ induces the map
$(\lambda,\p)\in\Lambda_n\times{\mathscr P}_n^*\mapsto (\lambda',\p')\in \Lambda_n\times {\mathscr P}_n$ in the data space, which we will denote by  $\overline{\mathcal{R}}$.
The domains of the maps $\overline{\mathcal{R}}:\Lambda_n\times
{\mathscr P}_n^*\to \Lambda_n\times {\mathscr P}_n$ and $\mathcal{R}$ are, respectively, the open full measure sets
$${\rm Dom}\,(\overline{\mathcal{R}})=\left\{ (\lambda,\p)\in\Lambda_n\times{\mathscr P}_n^*\mid\lambda_{\pi^{-1}(n)}\neq\lambda_n \right\},\quad {\rm Dom}(\mathcal{R})=\left\{T_{(\lambda,\p)}\mid (\lambda,\p)\in {\rm Dom}\,(\overline{\mathcal{R}})\right\}.$$
The map $\overline{\mathcal{R}}$ may be written in the form
\begin{eqnarray*}
\overline{\mathcal{R}}(\lambda,\p)=\left \{
\begin{array}{lll}
\Big(M_a(\p)^{-1}\lambda,a(\p)\Big)&\text{if}&\  \lambda_{ \pi^{-1}(n)}<\lambda_n\\
\Big(M_b(\p)^{-1}\lambda,b(\p)\Big)&\text{if}&\  \lambda_{ \pi^{-1}(n)}>\lambda_n\\
\end{array}
\right.,
\end{eqnarray*}
where the matrices $M_a(\p),M_b(\p)$ and the Rauzy maps $a,b:{\mathscr P}_n^*\to
{\mathscr P}_n$ are described below. For $i,j=1,\ldots,n$, denote
by $E_{ij}$ the $n\times n$ matrix of which the $(i,j)$th entry is
equal to $1$, and all other entries are  $0$. Let $I$
be the $n\times n$ identity matrix. The matrices $M_a$ and $M_b$ are
defined by
\begin{eqnarray*}
M_a(\p)&=&I+E_{n, \pi^{-1}(n)},\\
M_b(\p)&=&\sum_{i=1}^{ \pi^{-1}(n)}
E_{i,i}+E_{n,s(\p)}+\sum_{i= \pi^{-1}(n)}^{n-1} E_{i,i+1},
\end{eqnarray*}
where $s(\p)= \pi^{-1}(n)+(1+\theta_{ \pi^{-1}(n)})/2$.

We now define the Rauzy maps. When $\theta_n=+1$ the Rauzy map
$a: {\mathscr P}_n^* \to {\mathscr P}_n$ is defined by
$$a(\p)_i=\left\{
\begin{array}{ll}
\theta_i \pi_i, & \pi_i\leq \pi_n, \\
\theta_i (\pi_n+1), & \pi_i=n, \\
\theta_i (\pi_i+1), & \textrm{otherwise},\\
\end{array}
\right.
$$
and when $\theta_n=-1$, we have
$$
a(\p)_i=\left\{
\begin{array}{ll}
\theta_i \pi_i, & \pi_i\leq \pi_n-1, \\
-\theta_i \pi_n, & \pi_i=n, \\
\theta_i (\pi_i+1), & \textrm{otherwise}.\ \end{array} \right.
$$
The Rauzy map $b: {\mathscr P}_n^* \to {\mathscr P}_n$ when $\theta_{\pi^{-1}(n)}=+1$ is defined by
$$
b(\p)_i=\left\{
\begin{array}{ll}
\theta_i \pi_i, & i\leq  \pi^{-1}(n), \\
\theta_n \pi_n, & i=  \pi^{-1}(n)+1, \\
\theta_{i-1} \pi_{i-1}, & \textrm{otherwise},\\
\end{array}
\right.
$$
and when $\theta_{\pi^{-1}(n)}=-1$ by
$$
b(\p)_i=\left\{
\begin{array}{ll}
\theta_i \pi_i, & i\leq  \pi^{-1}(n)-1, \\
-\theta_n\pi_n, & i=  \pi^{-1}(n), \\
\theta_{i-1} \pi_{i-1}, & \textrm{otherwise}.\\
\end{array}
\right.
$$

\section{Stability of invariant components}


Given $(\lambda,\p) \in \Lambda_n \times{\mathscr P}_n^{*}$, we associate to the $n$-IET
$T=T_{(\lambda,\p)}$ the sequence
$T^{(0)},T^{(1)},T^{(2)},\ldots$ of $n$-IETs defined recursively by $T^{(0)}=T$ and
$T^{(k)}=T_{(\lambda^{(k)},\p^{(k)})}=\mathcal{R}(T^{(k-1)})=\mathcal{R}^k(T^{(0)})$ for all integers $k\ge 1$ such that $T^{(k-1)}\in {\rm Dom}\,(\mathcal{R})$. 
We say that $T$ has {\it finite expansion} if there exists $\ell=\ell(T)\ge 0$ such that 
$T\in {\rm Dom}\,(\mathcal{R}^{\ell})$ and $T^{(\ell)}$ is a reducible $n$-IET.
The number $\ell(T)$ is the least positive integer $m\ge 0$ such that $\p^{(m)}\in{\mathscr P}_n^{\rm red}$.



\begin{lemma}\label{bound} Let $(\lambda,\p)\in \Lambda_n^*\times{\mathscr P}_n$ be such that $\theta_i=p_i/\vert p_i\vert=-1$ and $\lambda_i>\Vert\lambda\Vert/2$ for some $i\in\{1,\ldots,n\}$. Then $\p$ is a reducible permutation.
\end{lemma}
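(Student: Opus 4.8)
The plan is to exhibit a proper open subinterval of $(0,\|\lambda\|)$ that is invariant under $T:=T_{(\lambda,\p)}$ --- in fact a periodic component --- and then to read off from it the reducibility of $\p$. Write $c=\|\lambda\|$ and let $I_i=(x_{i-1},x_i)$ be the flipped interval, so that $\theta_i=-1$ and $\lambda_i=x_i-x_{i-1}>c/2$ (in particular $n\ge 2$, since the remaining intervals have positive total length $c-\lambda_i$).

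First I would check that $I_i$ and its image $J_i:=T(I_i)$ overlap. Since $\sum_{k\ne i}\lambda_k=c-\lambda_i<c/2$, we have $x_{i-1}<c/2<x_i$, so $c/2\in I_i$; and $J_i$ is an open subinterval of $(0,c)$ of the same length $\lambda_i>c/2$, hence $c/2$ lies in its interior as well. Thus $I_i\cap J_i$ is a nonempty open interval. Because $\theta_i=-1$, the branch $T|_{I_i}$ is the orientation-reversing isometry $x\mapsto a-x$, where $a=x_i+\inf J_i$ is forced by $T(x_i^-)=\inf J_i$; its unique fixed point is $f=a/2$. Since $J_i=a-I_i$ is the reflection of $I_i$ through $f$ and the two intervals meet, $f$ must belong to $I_i\cap J_i$, and in particular to the interior of $\dom(T)$. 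Now set $B:=I_i\cap J_i$. This is an open interval symmetric about $f$ and contained in $I_i$, so on $B$ the map $T$ acts as $x\mapsto a-x$, the reflection in the midpoint of $B$; hence $T(B)=B$ and $T^2|_B=\mathrm{id}$. As $B\subseteq I_i\subsetneq(0,c)$ (because $\lambda_i<c$), the interval $B$ is a proper $T$-invariant subinterval of $(0,c)$, indeed a periodic component of $T$ all of whose orbits have period at most two.

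It then remains to turn ``$T$ has the proper invariant interval $B$'' into ``$\p$ is reducible.'' One endpoint of $B$ is an endpoint of $I_i$ (either $x_{i-1}$ or $x_i$, according to which half of $I_i$ is the shorter), and the other is a breakpoint of the range partition (either $\inf J_i$ or $\sup J_i$); combined with $T(B)=B$, this should single out a proper subset of $\{1,\dots,n\}$ preserved by $\pi$, which is exactly reducibility of $\p$. I expect this last step --- matching the two endpoints of $B$ against the data $(\lambda,\pi,\theta)$ and verifying that the domain intervals lying inside $B$ are permuted among themselves --- to be the main obstacle of the proof; the construction of $B$ itself uses only $\lambda_i>c/2$ and $\theta_i=-1$, so any role of the hypothesis $\lambda\in\Lambda_n^*$ should be confined to this final combinatorial reduction, e.g.\ to exclude accidental coincidences among $f$, the singular points $x_j$, and the range breakpoints.
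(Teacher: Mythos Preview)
Your construction of the invariant interval $B = I_i \cap T(I_i)$ is sound: both $I_i$ and $T(I_i)$ have length $\lambda_i > c/2$ and hence overlap, the reflection $T|_{I_i}$ has its fixed point in the overlap, and $B$ is indeed a proper periodic component on which $T^2=\mathrm{id}$. But the step you yourself flag as the ``main obstacle'' is not a bookkeeping detail --- it is the entire content of the lemma, and your outline does not supply it. Reducibility of $\p$ means that some initial segment $(0, x_k)$ with $1 \le k < n$ is $T$-invariant; your $B$ is not of this form, since it lies strictly inside the single domain interval $I_i$ and hence contains no full domain intervals to ``permute among themselves.'' The principle you are reaching for --- that a proper invariant subinterval, even one whose endpoints are a domain breakpoint on one side and a range breakpoint on the other, forces reducibility --- is false in general: irreducible IETs routinely possess periodic components with exactly this boundary structure. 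So the proposal, as written, stops short of a proof.

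The paper's argument is organized quite differently and makes the irrationality hypothesis $\lambda \in \Lambda_n^*$ do the real work rather than serve as a last-minute tiebreaker. It too locates the flipped fixed point $x^* \in I_i$, but then splits $\lambda_i = \lambda_i' + \lambda_i''$ with $\lambda_i' = x^* - x_{i-1}$ and $\lambda_i'' = x_i - x^*$, observes that since $\lambda$ is irrational at least one of the vectors $(\lambda_1, \ldots, \lambda_i', \ldots, \lambda_n)$ or $(\lambda_1, \ldots, \lambda_i'', \ldots, \lambda_n)$ is still irrational, and then extracts from the position of $x^*$ (equivalently, from how the reflected interval sits among the range blocks) an integer linear relation among the entries of that vector --- a contradiction. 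Irrationality is the engine here, not a device to ``exclude accidental coincidences''; your plan to postpone it to the end, without a concrete mechanism for how it would manufacture an invariant initial block, leaves the argument incomplete.
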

\begin{proof} Suppose that $(\lambda,\p)$ satisfies the hypotheses of the lemma. Let
$T=T_{(\lambda,\p)}$ and let ${\rm Dom}\,(T)=\bigcup_{j=1}^n (x_{j-1},x_j)$. It is easy to see
that the interval $(x_{i-1},x_i)$ contains a flipped fixed point $x^*$ of $T$. Let $\lambda_i'=x^*-x_{i-1}$
and $\lambda_i''=x_{i}-x^*$. Thus $\lambda_i=\lambda_i'+\lambda_i''$. Because $\lambda$ is irrational we have that either $\lambda'=(\lambda_1,\ldots,\lambda_{i-1},\lambda_i',\lambda_{i+1},\ldots,
\lambda_n)$ is irrational or $\lambda''=(\lambda_1,\ldots,\lambda_{i-1},\lambda_i'',\lambda_{i+1},\ldots,
\lambda_n)$ is irrational. Without loss of generality we assume that $\lambda''\in\Lambda_n^*$. Notice that $T$ reflects the interval $(x_{i-1},x_i)$ around the fixed point $x^*$. Hence, there exist
$s\in\{1,\ldots,n\}$ and $s$ integers $j_1,\ldots,j_s$, all of them distinct from $i$, such that
$\lambda_1+\ldots+\lambda_{i-1}=\lambda_{j_1}+\ldots+\lambda_{j_s}+\lambda_i''$. This means
that $\lambda''$ is rational, which is a contradiction.
\end{proof}

\begin{theorem}[Nogueira \cite{N2}]\label{lfu} Almost all irreducible interval exchange transformation with flip(s) have finite expansion and so the function $\ell:\Lambda_n\times{\mathscr{P}}_n^{\rm irred}\to\N$ is defined almost everywhere. 
\end{theorem}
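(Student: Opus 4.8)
The plan is to combine the main dynamical result of Nogueira \cite{N2} — that almost every IET with flip(s) has a stable (flipped) periodic orbit — with a Borel–Cantelli / induction argument showing that the Rauzy induction process, run on an almost-every IET, must terminate in a reducible permutation after finitely many steps. First I would observe that by Theorem \ref{KatHa} the IET decomposes into finitely many invariant components, and that the existence of a periodic component forces some iterate of a singular point to connect to another singular point or endpoint; in the Rauzy language this manifests as the appearance of a coincidence $\lambda_{\pi^{-1}(n)}^{(k)}=\lambda_n^{(k)}$ or, more to the point, of a reducible signed permutation in the orbit of $\p$ under the Rauzy maps $a,b$. So the key is to quantify how the stable periodic orbit obstructs the indefinite iteration of $\mathcal{R}$.

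The core mechanism, which is where Lemma \ref{bound} enters, is the following. If $T$ has a flipped periodic component, then after renormalizing by Rauzy induction the length of the interval carrying the flipped periodic (equivalently, fixed) point grows relative to the total length: each return-map step that does not destroy the flip must shrink the complement, and one shows that in finitely many steps one reaches a stage $k$ where some flipped subinterval has length exceeding $\Vert\lambda^{(k)}\Vert/2$. At that point Lemma \ref{bound} applies (using that irrationality is preserved under the invertible integer matrices $M_a^{-1},M_b^{-1}$, so $\lambda^{(k)}\in\Lambda_n^*$ whenever $\lambda\in\Lambda_n^*$) and yields that $\p^{(k)}$ is reducible, i.e. $T$ has finite expansion with $\ell(T)\le k$. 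Thus the step I would carry out carefully is: (i) reduce to irrational IETs, since $\Lambda_n\setminus\Lambda_n^*$ has measure zero; (ii) invoke \cite{N2} to get a flipped periodic orbit for a.e. such IET; (iii) show this periodic orbit survives and its relative length grows under $\mathcal{R}$ until the hypothesis of Lemma \ref{bound} is met; (iv) conclude reducibility at that finite stage, hence finite expansion, hence $\ell$ is defined a.e.

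The main obstacle I expect is step (iii): controlling the renormalized dynamics near the flipped periodic orbit and proving that the relative length of the flipped interval genuinely escapes above $1/2$ in finitely many steps, rather than, say, converging to a limit below $1/2$ or oscillating. The natural way to handle this is to pass to the induced IET on a small interval around the flipped fixed point: the first-return map there is a $2$-IET (or a low-complexity IET) with a flip, on which Rauzy induction is essentially the (flipped) continued-fraction algorithm, and one checks by hand that the flipped-interval length ratio is pushed past $1/2$ after boundedly many steps. One must also be careful that the Rauzy induction of the full $T$ and of this induced map are compatible — i.e. that a reducible permutation detected for the induced system lifts to a reducible permutation (hence finite expansion) for $T$ itself; this is where the relation between saddle-connections of $T$ and those of its Rauzy-induced maps is used, together with the observation from the proof of Theorem A that a periodic component forces a saddle-connection between singular points, which is exactly the obstruction that makes $\p^{(m)}$ reducible for some finite $m$.
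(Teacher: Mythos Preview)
Your proposal has a genuine gap at step (iii), and it stems from reversing the logical flow of \cite{N2}. In Nogueira's paper the existence of a flipped periodic orbit is \emph{deduced from} the dichotomy ``finite expansion or some flipped interval eventually dominates'', not the other way around. By taking the periodic-orbit theorem as your starting point and then trying to recover the dominance statement, you are essentially attempting to reprove the hard direction of \cite{N2} from one of its corollaries. Your sketch for doing so---inducing on a small interval around the flipped fixed point to get a $2$-IET, running the flipped continued-fraction algorithm there, and then ``lifting'' reducibility back to the original Rauzy induction---does not work as stated: the Rauzy induction of $T$ is the first-return to intervals of the form $[0,\xi]$ anchored at the left endpoint, and there is no general compatibility between this and the first-return to an interior interval around the fixed point. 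A reducible permutation detected for that auxiliary induced system does not lift to a reducible $\p^{(k)}$ for the Rauzy orbit of $(\lambda,\p)$; nor does dominance of a flipped interval in the auxiliary system say anything about the ratios $\lambda^{(k)}_i/\Vert\lambda^{(k)}\Vert$.

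The paper's proof is much shorter because it cites directly the intermediate dichotomy established \emph{in the course of the proof} of Corollary~3.3 of \cite{N2}: there exists a full-measure set $B_n\subset\Lambda_n^*$ such that for every $\lambda\in B_n$ either (a) $T_{(\lambda,\p)}$ has finite expansion, or (b) some flipped coordinate satisfies $\lambda_i>\Vert\lambda\Vert/2$. Lemma~\ref{bound} then disposes of case~(b) immediately: for irrational $\lambda$ a flipped interval of length exceeding half the total forces the permutation to be reducible, so one already has finite expansion with $\ell=0$. In other words, the role of Lemma~\ref{bound} in the paper is not to cap off a renormalization argument that you would still have to carry out, but to close the second branch of a dichotomy that \cite{N2} has already supplied. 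If you want a self-contained argument, what you need to reproduce is that measure-theoretic dichotomy from \cite{N2} (a Borel--Cantelli argument on the Rauzy cylinders), not the periodic-orbit consequence.
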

\begin{proof} Let $\p\in{\mathscr P}_n^{\rm irred}$ be a permutation with flips.
By the proof of Corollary 3.3 of \cite[p. 521]{N2}, there exists a full measure set $B_n\subset\Lambda_n^*$ such that if $\lambda\in B_n$ then either (a) $T_{(\lambda,\p)}$ has finite expansion or (b) $\theta_i=p_i/\vert p_i\vert=-1$ and $\lambda_i>\Vert\lambda\Vert/2$ for some $i\in\{1,\ldots,n\}$. In case (b), by Lemma \ref{bound}, $T_{(\lambda,\p)}$ has finite expansion.
\end{proof}



Given $(\lambda,\p)\in\Lambda_n\times\mathscr{P}_n^{*}$ and $k\ \ge 1$,  let $\nu_k:\Lambda_n\times{\mathscr P}_n^{*} \to\Lambda_n$ and ${\bf r}_k:\Lambda_n\times{\mathscr P}_n^{*} \to{\mathscr P}_n$ be the maps defined by
\begin{equation}\label{vr}
(\nu_k(\lambda,\p),{\bf r}_k(\lambda,\p))=\overline{\mathcal{R}}^{k}(\lambda,\p). 
\end{equation}

\begin{proposition}\label{abr} The domain of the maps $\nu_k$ and ${\bf r}_k$ is an open subset $U_k\subset\Lambda_n\times {\mathscr P}_n^{*}$. Furthermore, for each $(\mu,\p)\in U_k$ there exists
a neighborhood $W\times \{\p\}\subset U_k$ of $(\mu,\p)$ such that:
\begin{itemize}
\item [(a)] The map $\lambda\in W\mapsto {\bf r}_k(\lambda,\p)$ is constant; 
\item [(b)] The map $\lambda\in W\mapsto \nu_k(\lambda,\p)$ is a restriction of a linear isomorphism.
\end{itemize}
\end{proposition}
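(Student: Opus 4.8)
The plan is to prove the proposition by induction on $k$, relying on the explicit branch formulas for $\overline{\mathcal R}$ recalled above and on the (elementary) invertibility of the matrices $M_a(\p)$ and $M_b(\p)$. Note first that $\p\in{\mathscr P}_n^{*}$ forces $\pi^{-1}(n)\neq n$, so $M_a(\p)=I+E_{n,\pi^{-1}(n)}$ is unipotent with inverse $I-E_{n,\pi^{-1}(n)}$, while a short cofactor expansion along the last row of $M_b(\p)$ (which has a single nonzero entry) gives $\det M_b(\p)=\pm1$; thus both matrices lie in $GL_n(\R)$.

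For the base case $k=1$ one has $U_1={\rm Dom}(\overline{\mathcal R})$, and for each fixed $\p\in{\mathscr P}_n^{*}$ this is the complement in the open cone $\Lambda_n$ of the hyperplane $\{\lambda_{\pi^{-1}(n)}=\lambda_n\}$; since ${\mathscr P}_n$ is discrete, $U_1$ is open. Given $(\mu,\p)\in U_1$, I would take $W\ni\mu$ a ball small enough that $\lambda_{\pi^{-1}(n)}-\lambda_n$ keeps a constant, nonzero sign on $W$; then on $W\times\{\p\}$ exactly one branch of $\overline{\mathcal R}$ applies, so ${\bf r}_1(\cdot,\p)$ equals the constant $a(\p)$ or $b(\p)$, and $\nu_1(\lambda,\p)$ equals $M_a(\p)^{-1}\lambda$ or $M_b(\p)^{-1}\lambda$, which is the restriction of a linear isomorphism.

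For the inductive step, assume the statement for $k$; in particular $U_k$ is open and $\overline{\mathcal R}^{k}$ is continuous on $U_k$, being locally of the form $(\lambda,\p)\mapsto(B\lambda,\sigma)$ with $\sigma\in{\mathscr P}_n$ fixed and $B\in GL_n(\R)$ fixed. Since $U_{k+1}=\{(\lambda,\p)\in U_k:\overline{\mathcal R}^{k}(\lambda,\p)\in{\rm Dom}(\overline{\mathcal R})\}$, it is the preimage of the open set ${\rm Dom}(\overline{\mathcal R})$ under a continuous map, hence open. Now fix $(\mu,\p)\in U_{k+1}$ and choose, by the inductive hypothesis, a neighborhood $W_0\times\{\p\}\subset U_k$ on which ${\bf r}_k(\cdot,\p)\equiv\sigma$ and $\nu_k(\lambda,\p)=B\lambda$. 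Since $(\mu,\p)\in U_{k+1}$, we have $(B\mu,\sigma)\in{\rm Dom}(\overline{\mathcal R})$; in particular $\sigma\in{\mathscr P}_n^{*}$ and, writing $m:=|\sigma|^{-1}(n)$, $(B\mu)_m\neq(B\mu)_n$. As $\lambda\mapsto\mathrm{sign}\big((B\lambda)_m-(B\lambda)_n\big)$ is continuous and nonvanishing near $\mu$, it is constant, equal to some $\epsilon\in\{-1,+1\}$, on a ball $W\subset W_0$ around $\mu$. On $W\times\{\p\}$ we then have $\overline{\mathcal R}^{k+1}(\lambda,\p)=\overline{\mathcal R}(B\lambda,\sigma)$ evaluated on the single branch selected by $\epsilon$, so ${\bf r}_{k+1}(\cdot,\p)$ is the constant $a(\sigma)$ (if $\epsilon<0$) or $b(\sigma)$ (if $\epsilon>0$), and $\nu_{k+1}(\lambda,\p)=M_a(\sigma)^{-1}B\lambda$ or $M_b(\sigma)^{-1}B\lambda$, i.e.\ the restriction to $W$ of a product of linear isomorphisms. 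Finally $W\times\{\p\}\subset U_{k+1}$, since for $\lambda\in W$ we have $(\lambda,\p)\in U_k$ and $\overline{\mathcal R}^{k}(\lambda,\p)=(B\lambda,\sigma)\in{\rm Dom}(\overline{\mathcal R})$. This closes the induction.

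I expect the only real difficulty to be organizational: at each stage one must shrink the neighborhood furnished by the inductive hypothesis so that simultaneously $\overline{\mathcal R}^{k}$ stays on its linear branch, its image remains inside ${\rm Dom}(\overline{\mathcal R})$, and the inequality $(B\lambda)_m\neq(B\lambda)_n$ persists with a fixed sign. Once the neighborhood is chosen with this care, properties (a) and (b) follow by composing linear maps and the openness of $U_{k+1}$ comes for free from continuity; the sole genuine computation, the invertibility of $M_a(\p)$ and $M_b(\p)$, is read off directly from their definitions.
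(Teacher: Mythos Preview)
Your proof is correct and follows essentially the same route as the paper's: induction on $k$, using that on each branch $\overline{\mathcal R}$ acts by an invertible linear map in the $\lambda$-coordinate and a constant in the permutation coordinate, and that $|\det M_a(\p)|=|\det M_b(\p)|=1$. The only cosmetic difference is that the paper passes from $k-1$ to $k$ (mixing the decompositions $\overline{\mathcal R}^{k}=\overline{\mathcal R}^{k-1}\circ\overline{\mathcal R}$ for openness and $\overline{\mathcal R}^{k}=\overline{\mathcal R}\circ\overline{\mathcal R}^{k-1}$ for (a)--(b)), whereas you consistently use $\overline{\mathcal R}^{k+1}=\overline{\mathcal R}\circ\overline{\mathcal R}^{k}$ and spell out the branch-selection and invertibility arguments more explicitly.
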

\begin{proof} We proceed by induction on $k$.
By definition, ${\rm Dom}\,({\nu}_k)={\rm Dom}\,({\bf r}_k)={\rm Dom}\,(\overline{\mathcal{R}}^{k})$. For $k=1$,  notice that ${\rm Dom}\,(\overline{\mathcal {R}})$ is an open subset of $\Lambda_n\times {\mathscr P}_n^{*}$. Let ${(\mu,\p)}\in {\rm Dom}\,(\overline{\mathcal{R}})$. By the definition of $\overline{\mathcal{R}}$, there exist $c\in \{a,b\}$
and an open neighborhood $W\subset\Lambda_n$ of $\mu$ such that $W\times\{\p\}\subset {\rm Dom}\,(\overline{\mathcal{R}})$, ${\bf r}_1(\lambda,\p)=c(\p)$  and $\nu_1(\lambda,\p)=M_c (\p)^{-1}\lambda$ for all $\lambda\in W$. As $\vert{\rm det}\,(M_c(\p))\vert= 1$, we have that $\lambda\mapsto\mathcal{\nu}_1(\lambda,\p)$ is restriction of a linear isomorphism. Hence, Proposition \ref{abr} holds for $k=1$. Suppose now that the Proposition \ref{abr} holds for $k-1$. Let us prove that it then holds for $k$. We have that ${\rm Dom}\,(\overline{\mathcal{R}}^k)=\{(\mu,\p)\in {\rm Dom}\,(\overline{\mathcal{R}})\mid\overline{\mathcal{R}}(\mu,\p)\in {\rm Dom}\,(\overline{\mathcal{R}}^{k-1})\}$. Let $(\mu,\p)\in {\rm Dom}\,(\overline{\mathcal{R}}^{k})$. Hence
$(\alpha,{\bf q}):=\overline{\mathcal{R}}(\mu,\p)\in {\rm Dom}\,(\overline{\mathcal{R}}^{k-1})$. By the induction hypothesis, ${\rm Dom}\,(\overline{\mathcal{R}}^{k-1})$ is open. By the above, as
$\mathcal{R}(\mu,\p)=(\nu_1(\mu,\p),{\bf r}_1(\mu,\p))$, there exist a neighborhood $W\subset\Lambda_n$ of $\mu$ and a linear isomorphism $L:\R^n\to\R^n$ such that
$\overline{\mathcal{R}}(W\times\{\p\})\subset L(W)\times \{{\bf q}\}\subset {\rm Dom}\,(\overline{\mathcal{R}}^{k-1})$. This means that
${\rm Dom}\,(\overline{\mathcal{R}}^k)$ is open. Properties (a) and (b) follow from the induction hypothesis and the identities ${\nu}_k(\lambda,\p)=\nu_1\big( \nu_{k-1}(\lambda,\p), {\bf r}_{k-1}(\lambda,\p)  \big)$ and ${\bf r}_k(\lambda,\p)={\bf r}_1\big( \nu_{k-1}(\lambda,\p), {\bf r}_{k-1}(\lambda,\p)  \big)$.
\end{proof}


\begin{corollary}\label{same} Let $T\in{\rm Dom}\,(\mathcal{R}^k)$ and $T^{(k)}=\mathcal{R}^k(T)$.
Then there is a bijection between the invariant components of $T$ and those of $T^{(k)}$. More precisely, 
each periodic component $($respectively stable periodic component, minimal component, robust minimal component$)$ of 
$T$ is associated to one and only one periodic component $($respectively stable periodic component, minimal component, robust 
minimal component$)$ of $T^{(k)}$.
\end{corollary}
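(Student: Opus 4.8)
\noindent\emph{Proof proposal.} The plan is to prove the statement for $k=1$ and then iterate: since $\mathcal{R}^k=\mathcal{R}\circ\cdots\circ\mathcal{R}$, a type‑preserving bijection of invariant components at each single step composes to a type‑preserving bijection between the invariant components of $T$ and those of $T^{(k)}$. So fix $T=T_{(\lambda,\p)}\in{\rm Dom}\,(\mathcal{R})$ and write $T':=\mathcal{R}(T)$, which by definition is the first‑return map of $T$ to the subinterval $I=[0,\xi]$, with $\xi=c-\min(\lambda_{\pi^{-1}(n)},\lambda_n)$. Thus $\xi$ is a singular point of $T$ (namely $x_{n-1}$) or of $T^{-1}$ (namely $c-\lambda_{\pi^{-1}(n)}$), and in both cases $x_{n-1}\le\xi<x_n=c$, so the removed block $(\xi,c)$ lies inside the single subinterval $(x_{n-1},c)$ of ${\rm Dom}\,(T)$, on which $T$ is an isometry. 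The first thing I would record is that $T\big((\xi,c)\big)\subseteq I$. Indeed, $\p\in{\mathscr P}_n^{*}$ means $\pi(n)\ne n$, so the image $T\big((x_{n-1},c)\big)$ occupies a position other than the last one in ${\rm Dom}\,(T^{-1})$, hence lies to the left of the point $c-\lambda_{\pi^{-1}(n)}\le\xi$; a direct length comparison in the two cases defining $\xi$ then gives $T((\xi,c))\subseteq(0,\xi)\subseteq I$ (this uses only the position $\pi(n)$, not orientation, so flips are harmless). Consequently the block $(\xi,c)$ is \emph{transient}: every infinite forward or backward $T$‑orbit enters $I$ after at most one step, and $T^{-1}((\xi,c))\subseteq I$ as well.

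From transience, every invariant component $O$ of $T$ meets $I$: a minimal component contains a singular point by Lemma \ref{3.2}, and every singular point lies in $[0,x_{n-1}]\subseteq I$; a periodic component $O$ satisfies $T(O)=O$, so $O\subseteq(\xi,c)$ would force $O=T(O)\subseteq I$, a contradiction, and since $O$ contains no singular point we get $O\cap[0,\xi)\ne\emptyset$. I would then define $\Phi(O)={\rm int}\,\big(\overline{O\cap I}\big)$, and, in the other direction, for an invariant component $O'$ of $T'$ let $\Psi(O')={\rm int}\,\big(\overline{\bigcup_{j\ge 0}T^{j}(O'\cap\mathbb{I})}\big)$ be its $T$‑saturation. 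Because $T'$ is the first‑return map, for $x\in I\cap\mathbb{I}$ one has $O_{T'}(x)=O_T(x)\cap I$, while for $x\in\mathbb{I}$ the bi‑infinite $T$‑orbit meets $I$ infinitely often; using this together with (KH1)--(KH4), Lemma \ref{minvtran}, and the fact that $\xi$ is an endpoint of the new IET, one checks that $\Phi(O)$ is an invariant component of $T'$, that $\Psi(O')$ is an invariant component of $T$, and that $\Phi$ and $\Psi$ are mutually inverse. Types are already matched at this level: $x$ is $T$‑periodic iff it is $T'$‑periodic (the first‑return map of a rigid interval is again an isometry of a rigid interval, with a possibly different period), so $\Phi$ restricts to a bijection of periodic components; and by Lemma \ref{minvtran} a component is minimal iff it is transitive, and an infinite orbit is dense in $O$ iff its trace on $I$ is dense in $O\cap I$, so $\Phi$ also restricts to a bijection of minimal components.

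It remains to match the adjectives \emph{stable} and \emph{robust}, and this is where I invoke Proposition \ref{abr}: there is a neighborhood $W\times\{\p\}$ of $(\lambda,\p)$ on which $\overline{\mathcal{R}}$ is $\mu\mapsto(M_c(\p)^{-1}\mu,\,c(\p))$ for a fixed $c\in\{a,b\}$, i.e.\ the restriction of a linear isomorphism with constant combinatorial data. Being bi‑Lipschitz on $W$, this identifies a neighborhood of $T$ with a neighborhood of $T'$ in the space of IETs — extending, by the same first‑return‑map description applied to small perturbations that need not be of the form $T_{(\mu,\p)}$, to a homeomorphism of full IET‑neighborhoods — in a way that carries periodic components to periodic components and minimal components to minimal components with $\rho$‑distances distorted by at most a fixed factor, and that maps measure‑zero exceptional sets to measure‑zero exceptional sets. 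Hence $O$ is a stable periodic (respectively robust minimal) component of $T$ if and only if $\Phi(O)$ is a stable periodic (respectively robust minimal) component of $T'$, which finishes the $k=1$ case and, by composition, the corollary. The step I expect to be the real obstacle is precisely this last one: setting up the correspondence of \emph{neighborhoods in the space of IETs} — not merely of length vectors with fixed combinatorics — carefully enough to transport both the $\rho$‑estimates and the almost‑sure clauses through a Rauzy step; by contrast the topological bijection $\Phi$ and the preservation of the bare periodic/minimal dichotomy are comparatively routine. \cqd
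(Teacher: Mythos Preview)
Your argument is correct and follows essentially the same route as the paper: (i) the bijection of periodic/minimal components comes from the fact that $T^{(k)}$ is the first-return map of $T$ to a subinterval $(0,\rho)$ together with the transience $T\big((\rho,c)\big)\subset(0,\rho)$, and (ii) the adjectives \emph{stable}/\emph{robust} transfer via Proposition~\ref{abr}. The paper handles general $k$ in one shot (the composite of Rauzy steps is again a first-return map) rather than iterating the case $k=1$, and is considerably terser about the construction of the bijection you call $\Phi,\Psi$. The obstacle you flag at the end is not actually present: IETs are parametrized by $\Lambda_n\times{\mathscr P}_n$ with ${\mathscr P}_n$ finite (hence discrete), so a neighborhood of $T_{(\lambda,\p)}$ is already of the form $W\times\{\p\}$, and the linear isomorphism of Proposition~\ref{abr} is by itself enough to transport measure-zero exceptional sets and $\rho$-estimates between a neighborhood of $T$ and one of $T^{(k)}$.
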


\begin{proof} The bijection of periodic and transitive component of $T$ and $T^{(k)}$ holds since 
 $T^{(k)}$ is the first return map of $T:(0,c)\to (0,c)$ to some subinterval $(0,\rho) \subset (0,c)$, and since $T (\rho,c)
\subset (0,\rho)$. By \mbox{Proposition \ref{abr}}, the map $\lambda\mapsto\nu_k(\lambda,\p)$ takes sets of positive measure 
to sets of positive measure. Since $\nu_k(\lambda,\p)$ gives the length vector of $T^{(k)}$, stable periodic components 
(respectively robust minimal components) of $T$ correspond to stable periodic components 
(respectively robust minimal components) of $T^{(k)}$.
\end{proof}

We say that ${\bf q}=(q_1,\ldots,q_n)\in{\mathscr P}_n^{\rm red}$ is {\it reducible} into $s\in\{2,\ldots,n\}$ 
irreducible signed permutations if there exist $s$ positive integers 
 $n_1$, $n_2$,\ldots, $n_s$ with $\sum_{j=1}^s n_j=n$ such that 
 $\{\vert {\bf q} ({d_j+1}) \vert,\ldots,\vert {\bf q}(d_{j}+n_j) \vert \}=\{d_{j}+1,\ldots,d_{j}+n_j \}$ for all $j\in\{1,\ldots,s\}$, where
 $n_0=0$ and $d_j=\sum_{k=0}^{j-1} n_k$. In this case, ${\bf q}$ induces $s$ irreducible permutations
 ${\bf v}_1\in{\mathscr P}_{n_1}^{\rm irred}$,\ldots, ${\bf v_s}\in{\mathscr P}_{n_s}^{\rm irred}$ defined by
 $$
{\bf v}_j(i)=\dfrac{{\bf q}(i+d_j)}{\vert{\bf q}(i+d_j)\vert}\big(\vert {\bf q}(i+d_j)\vert-d_j  \big),\quad i\in\{1,\ldots, n_j\}.
$$
We will write
 \begin{equation}\label{decompo}
 {\bf q}=({\bf v}_1,\ldots,{\bf v}_s)\in {\mathscr P}_{n_1}^{\rm irred}\times\cdots\times{\mathscr P}_{n_s}^{\rm irred}
 \end{equation}
 to mean that ${\bf q}$ is reducible into the irreducible permutations ${\bf v}_1$,\ldots, ${\bf v}_s$.
 This decomposition is clearly unique.
 Thus if ${\bf q}$ is reducible, then for any length vector $\lambda$, 
the domain of the $n$-IET $T_{(\lambda,{\bf p})}$ decomposes into
 $s$ invariant sets where $s$ is given by Equation \eqref{decompo}.
 The restriction of $T_{(\lambda,{\bf p})}$ to the $j$th-invariant set is an irreducbile $n_j$-IET with permutation vector ${\bf v}_j$ and length vector $(\lambda_{h_j}, \dots, \lambda_{h_j + n_j-1})$
where $h_j = 1 + \sum_{k<j}  n_k$ for $j \ge 1$.\\

In the proof that follows below, we will use the following notation: given
 $U\subset \Lambda_n\times {\mathscr P}_n$ we let $\mathcal{T}(U)=\{T_{(\lambda,\p)}\mid
(\lambda,\p)\in U\}$.\\

\noindent{\it Proof of Theorem B}.
Keane has shown that Theorem B holds for oriented IETs \cite{K}.
In  fact, almost every oriented IET has one robust minimal component and no periodic component.
Now we consider the case of IETs with flip(s). 
By decomposing the IET into irreducible interval exchanges we may assume that
the initial IET is irreducible. The proof proceeds by induction on the number of intervals.  Of course, every IET with flip of 1 interval 
has only 1 stable periodic component and no minimal component. Suppose as the induction hypothesis that Theorem B holds 
for all IETs of $k\le n-1$ intervals. More precisely, suppose that for $1 \le k \le n-1$, 
there is a set of full measure $C_k \subset \Lambda_k$ such that for any $k$-IET in
$\mathcal{T}(C_k\times{\mathscr P}_k)$
 every periodic component is stable and 
every minimal component is robust. By \mbox{Theorem \ref{lfu}}, there exists a full measure set $B_n\subset\Lambda_n^*$ 
such that all $n$-IET in $\mathcal{T}(B_n\times{\mathscr P}_n)$ with flip(s) have finite expansion. Let 
$T = T_{(\lambda,\p)}\in \mathcal{T}(B_n\times {\mathscr P}_n^{\rm irred})$ be an $n$-IET with flip(s). Then there exists $m=\ell(\lambda,\p)\in\N$ 
such that $T\in {\rm Dom}\,({\mathcal R}^m)$ and $T^{(m)}=\mathcal{R}^m(T)$ is a reducible $n$-IET. 

Let $T_{(\boldsymbol{\mu},{\bf q})} := T^{(m)}$. By definition, ${\bf q}\in {\mathscr P}_n^{\rm red}$
and since $\lambda \in \Lambda_n^*$, we have
$\boldsymbol{\mu}\in\Lambda_n^*$.
 Let ${\bf q}=({\bf v}_1,\ldots,{\bf v}_n)\in{\mathscr P}_{n_1}^{\rm irred}\times\cdots\times{\mathscr P}_{n_s}^{\rm irred} $ be the decomposition of ${\bf q}$ into irreducible signed permutations
 and let
${\boldsymbol{\mu}}=(\boldsymbol{\mu_1},\ldots,\boldsymbol{\mu}_s)\in\Lambda_{n_1}^*\times\cdots\times\Lambda_{n_s}^*$be the associated decomposition of $\boldsymbol{\mu}$. 
By Proposition \ref{abr}, there exists a full measure set $C_n\subset B_n$ such that if ${(\lambda,\p)}\in C_n\times {\mathscr P}_{n}^{\rm irred}$  then $(\boldsymbol{\mu}_j,{\bf v}_j)\in C_{n_j}\times {\mathscr P}_{n_j}^{\rm irred}$ for all $1\le j\le s$. Hence, by the induction hypothesis, each $T_{(\boldsymbol{\mu}_j,{\bf v}_j)}$ has only stable periodic components and robust minimal components.  By Corollary \ref{same}, for each IET in $\mathcal{T}(
C_n\times {\mathscr P}_n)$ every periodic component is stable and every minimal component is robust.\cqd
 
 \section{Existence result}
 
 
 
 
 
 The aim of this section is proving Theorem C. Firstly we will introduce some notation and preparatory lemmas.
 
 Let $a,b:{\mathscr P}_n^*\to {\mathscr P}_n$ be the Rauzy maps and let ${F}:
 {\mathscr P}_n\to 2^{{\mathscr P}_n}$ be the set-valued function defined by $F(\p)=\{a(\p),b(\p)\}$ if $\p\in {\mathscr P}_n^*$ and
  $F(\p)=\emptyset$ if $\p\in {\mathscr P}_n\setminus {\mathscr P}_n^*$. We let $F:2^{{\mathscr P}_n}\to 2^{{\mathscr P}_n}$ be the induced map defined by $F(S)=\bigcup_{\p\in S} F(\p)$ for $S\subset\mathscr{P}_n$. The {\it forward set} of $\p\in {\mathscr P}_n^*$ is the set of permutations defined by
 $ {\mathscr F}(\p)=\bigcup_{k\ge 0}F^k(\{\p\})$. Notice that the forward set of a permutation $\p$ is the set of all permutations that can be obtained from $\p$ through applications of the Rauzy maps finitely many times.
  
 \begin{lemma}\label{lemimp} Let $\p\in {\mathscr P}_n^*$ and ${\mathbf q}\in {\mathscr F}(\p)$. There exist a positive measure set $B_n\subset \Lambda_n$, a positive integer $K\in\N$ and a linear isomorphism $L:\R^n\to \R^n$
 such that $(\lambda,\p)\in {\rm Dom}\,(\overline{\mathcal{R}}^K)$ and
 $\overline{\mathcal{R}}^K(\lambda,\p)=(L(\lambda),{\mathbf q})$ for all $\lambda\in B_n$.
 \end{lemma}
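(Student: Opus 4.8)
The plan is to translate the hypothesis $\mathbf q\in\mathscr F(\p)$ into a finite combinatorial Rauzy path and then realize that path by a full‑dimensional cone of length vectors, arguing by induction on the length of the path. By the definition of the forward set there are an integer $K\ge 0$ and signed permutations $\p=\p_0,\p_1,\ldots,\p_K=\mathbf q$ with $\p_j\in{\mathscr P}_n^{*}$ and $\p_{j+1}=c_j(\p_j)$ for some $c_j\in\{a,b\}$, for each $0\le j<K$ (the membership $\p_j\in{\mathscr P}_n^{*}$ for $j<K$ being forced by the definition of $F$). I will prove by induction on $K$ the slightly stronger statement: there exist a nonempty open cone $B_n\subset\Lambda_n$ and a linear isomorphism $L\colon\R^n\to\R^n$ with $(\lambda,\p)\in{\rm Dom}\,(\overline{\mathcal{R}}^{\,K})$ and $\overline{\mathcal{R}}^{\,K}(\lambda,\p)=(L\lambda,\mathbf q)$ for all $\lambda\in B_n$, and moreover $L(B_n)=\Lambda_n$. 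The case $K=0$ (that is, $\mathbf q=\p$) is trivial with $B_n=\Lambda_n$, $L=\mathrm{id}$, so assume $K\ge1$. Since a nonempty open cone has positive Lebesgue measure, the strengthened statement implies the lemma.

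The one genuinely non‑formal ingredient is a statement about a single Rauzy step, checked directly from the explicit formulas for $M_a(\p)$ and $M_b(\p)$: for $\p\in{\mathscr P}_n^{*}$ and $c\in\{a,b\}$, the branch cone $A^c(\p)=\{\lambda\in\Lambda_n:\lambda_{\pi^{-1}(n)}<\lambda_n\}$ (for $c=a$) or $\{\lambda\in\Lambda_n:\lambda_{\pi^{-1}(n)}>\lambda_n\}$ (for $c=b$), where $\pi=\vert\p\vert$, is a nonempty open cone on which $\overline{\mathcal{R}}(\lambda,\p)=(M_c(\p)^{-1}\lambda,c(\p))$ with $\vert\det M_c(\p)\vert=1$, and the linear map $M_c(\p)^{-1}$ carries $A^c(\p)$ \emph{onto} all of $\Lambda_n$. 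For $c=a$ this is immediate from $M_a(\p)^{-1}=I-E_{n,\pi^{-1}(n)}$ (note $\pi^{-1}(n)\ne n$ because $\p\in{\mathscr P}_n^{*}$), which sends $\lambda$ to $(\lambda_1,\ldots,\lambda_{n-1},\lambda_n-\lambda_{\pi^{-1}(n)})$. For $c=b$ one solves $M_b(\p)\mu=\lambda$ for $\mu$ and finds that the only positivity constraint forced on $\lambda$ is exactly $\lambda_{\pi^{-1}(n)}>\lambda_n$, i.e. the defining inequality of $A^b(\p)$, while every $\mu\in\Lambda_n$ is attained; this is the familiar invertibility of each branch of Rauzy induction, which also underlies Proposition \ref{abr}. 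I expect the $b$‑branch bookkeeping (with the concatenation‑type matrix $M_b(\p)$) to be the main, though routine, obstacle.

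Granting this, the base case $K=1$ is settled by $B_n=A^{c_0}(\p)$ and $L=M_{c_0}(\p)^{-1}$. For the inductive step, apply the induction hypothesis to the length‑$(K-1)$ path $\p_1,\ldots,\p_K=\mathbf q$ issuing from $\p_1$: there are a nonempty open cone $B_n'\subset\Lambda_n$ and a linear isomorphism $L'$ with $\overline{\mathcal{R}}^{\,K-1}(\mu,\p_1)=(L'\mu,\mathbf q)$ for $\mu\in B_n'$ and $L'(B_n')=\Lambda_n$. Put $M=M_{c_0}(\p)^{-1}$ and $B_n=A^{c_0}(\p)\cap M^{-1}(B_n')$. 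Since $M$ maps $A^{c_0}(\p)$ bijectively onto $\Lambda_n\supset B_n'$, the set $B_n$ is a nonempty open cone and $M(B_n)=M(A^{c_0}(\p))\cap M(M^{-1}(B_n'))=\Lambda_n\cap B_n'=B_n'$. Hence for $\lambda\in B_n$ we have $\overline{\mathcal{R}}(\lambda,\p)=(M\lambda,\p_1)$ with $M\lambda\in B_n'$, so $(\lambda,\p)\in{\rm Dom}\,(\overline{\mathcal{R}}^{\,K})$ and $\overline{\mathcal{R}}^{\,K}(\lambda,\p)=\overline{\mathcal{R}}^{\,K-1}(M\lambda,\p_1)=(L'M\lambda,\mathbf q)$. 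Taking $L=L'M$, a linear isomorphism, completes the induction, and indeed $L(B_n)=L'(M(B_n))=L'(B_n')=\Lambda_n$. Everything outside the single‑step surjectivity fact — openness, the cone property, composition of the linear pieces — is bookkeeping.
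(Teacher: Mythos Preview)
Your argument is correct and is essentially the same as the paper's: the paper writes down the product $M=M_{c_0}(\p_0)\cdots M_{c_{K-1}}(\p_{K-1})$, sets $B_n=M\Lambda_n$ and $L=M^{-1}$, and asserts (without further justification) that Rauzy induction on $B_n$ follows the prescribed path; your induction unwinds exactly this construction and supplies the one fact the paper leaves implicit, namely that $M_c(\p)$ carries $\Lambda_n$ bijectively onto the branch cone $A^c(\p)$.
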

 \begin{proof} Since ${\mathbf q}\in\mathscr{F}(\p)$, there exists a sequence $\{(c_k,{\mathbf q}^{(k)})\}_{k=0}^K\subset\{a,b\}\times {\mathscr P}_n$ such that ${\mathbf q}^{(0)}=\p$, ${\mathbf q}^{(K)}={\mathbf q}$,
 $\{(c_k,{\mathbf q}^{(k)})\}_{k=0}^{K-1}\subset\{a,b\}\times{\mathscr P}_n^*$ and ${\mathbf q}^{(k)}=c_{k-1}({\mathbf q}^{(k-1)})$ for all $1\le k\le K$. Let
 $$M=M_{c_0}({\mathbf q}^{(0)})M_{c_1}({\mathbf q}^{(1)})\cdots M_{c_{(K-1)}}({\mathbf q}^{(K-1)})
 $$
 and let $B_n=M\Lambda_n$. Then every $T = T_{(\lambda,\p)}\in \mathcal{T}(B_n\times {\mathscr P}_n^*)$ has the property
 that $T\in {\rm Dom}\,(\mathcal{R}^K)$ and $T^{(K)}=\mathcal{R}^K(T)=T_{(L(\lambda),{\mathbf q})}$, where
 $L(\lambda)=M^{-1}\lambda$.
 \end{proof}

Now we introduce a concrete
 example of an interval exchange transformations of $7$ intervals having 2 robust minimal components and 3 stable periodic components. This example will be generalized later to prove Theorem C.
 
 Let $\p=(-7,6,5,-3,-4,-1,-2)\in {\mathscr P}_7^{\rm irred}$. We claim that ${\bf q}=(2,1,4,3,-5,-6,-7)\in{\mathscr F}(\bf{p})$. In fact, we have that ${\bf q}=b^6(\bf{p})$. By Lemma \ref{lemimp}, there exist irrational length vectors $\lambda,\mu\in \Lambda_7^*$ such that if $T_{(\lambda,\p)}$ then
 $T\in {\rm Dom}\,({\mathcal{R}}^6)$ and $T_{(\mu,{\bf q})}=T^{(6)}={\mathcal{R}}^6(T_{(\lambda,\p)})$. It is clear that $T^{(6)}$ can be decomposed into two irrational rotations and three interval exchanges of $1$ interval with 1 flip. Therefore, by Corollary \ref{same}, $T$ has two robust minimal components and three stable periodic components (see Figure \ref{Figura}).
 
In Figure 1, the orbit of five sample points have been plotted
on the $x$-axis, one in each invariant component of $T$. It is easy to see that the blue and the red orbits belong to minimal components whereas the black, green and orange orbits belong to periodic components. Notice that, as prescribed by Lemmas \ref{3.2} and \ref{3.3}, each invariant component has at least one singular point at the boundary, whereas the minimal components have additionally singular points in the interior of the component. Moreover, it is possible to identify the period of the periodic components, 
there are two periodic components of period 4 and 1 periodic component of \mbox{period 2}.\\

\noindent{\it Proof of Theorem C.} Firstly let $k=n$ and $\ell=0$ and consider the permutation
$$\p=(-n,n-1,n-2\ldots,2,1)\in{\mathscr P}_n^{\rm irred}.$$
Clearly  ${\bf q}=b^{n-1}(\p)=(-1,-2,\ldots,-(n-1),-n)$. Thus ${\bf q}\in {\mathscr F}(\p)$. By Lemma \ref{lemimp} there exist length vectors $\lambda,\mu\in \Lambda_n^*$ such that
$(\mu,{\bf q})=\overline{\mathcal{R}}^{n-1}(\lambda,\p)$. Let $T=T_{(\lambda,\p)}$.
Because $T^{(n-1)} = T_{(\mu,{\bf q})}$ has $n$ stable periodic components and no minimal component, we have by Corollary \ref{same} that $T_{(\lambda,{\bf q})}$ has $n$ stable periodic components and no minimal component.

Now let us consider the case in which $k\ge 2$, $1\le \ell <n/2$ and $k+2\ell\le n$. Let $\p\in{\mathscr P}_n^{\rm irred}$ be the following permutation
$$\mbox{\fontsize{10}{12}\selectfont $\p=(-n,n-1,\ldots,n-(k-1),-[n-(k-3)],-[n-(k-2)],\ldots,-(r+1),-(r+2),-1,-2,\ldots,-r),$}$$
where $r=n-k-2(\ell-1)=n-(k+2\ell)+2$. The number $r$ is the number of intervals necessary to form the $\ell$-th minimal component. We will construct an irreducible $n$-IET with flip(s) which has a Rauzy induced
 with $k$ flipped periodic components, $\ell-1$ robust minimal components of rotation type each, and one last minimal component consisting of an oriented $r$-IET. We have that:
$$\mbox{\fontsize{10}{12}\selectfont ${\bf q}=b^{n-1}(\p)=(\underbrace{r,\ldots,2,1}_{\rm min.},\underbrace{r+2,r+1}_{\rm min.},\ldots,
\underbrace{n-(k-2),n-(k-3)}_{\rm min.},\underbrace{-[n-(k-1)]}_{\rm per.},\ldots,\underbrace{-(n-1)}_{\rm per.},\underbrace{-n}_{\rm per.}).$}$$
Thus ${\bf q}\in {\mathscr F}(\p)$. By Lemma \ref{lemimp} there exist length vectors $\lambda,\mu\in \Lambda_n^*$ such that
$(\mu,{\bf q})=\overline{\mathcal{R}}^{n-1}(\lambda,\p)$. Let $T=T_{(\lambda,\p)}$. It is easy to see that $T^{(n-1)} = T_{(\mu,{\bf q})}$ has $k$ stable (flipped) periodic components and $\ell$ robust minimal components. By Corollary \ref{same}, $T_{(\lambda,{\bf p})}$ has $k$ stable periodic components and $\ell$ robust minimal components. 

Finally, in the case $k=1$, $1\le \ell< n/2$ and $k+2\ell\le n$, let $\p\in{\mathscr P}_n^{\rm irred}$ be the following permutation:
$$\mbox{\fontsize{10}{12}\selectfont $\p=(-n,-(n-2),-(n-1),\ldots,-(r+1),-(r+2),-1,-2,\ldots,-r),$}$$
where $r=n-2\ell+1$. In this case, 
$$\mbox{\fontsize{10}{12}\selectfont ${\bf q}=b^{n-1}(\p)=(\underbrace{r,\ldots,2,1}_{\rm min.},\underbrace{r+2,r+1}_{\rm min.},\ldots,\underbrace{n-1,n-2}_{\rm min.},\underbrace{-n}_{\rm per.}).$}$$
Thus ${\bf q}\in {\mathscr F}(\p)$ and we may apply the same reasoning as above. This proves the $(\Leftarrow)$ part of Theorem C. 

Now let us prove the $(\Rightarrow)$ part. Let ${\mathscr P}_n^f$ be the
subset of ${\mathscr P}_n^{\rm irred}$ formed by the irreducible permutations having flip(s).
A combination of Theorem A, \mbox{Theorem B}, Theorem \ref{lfu} and Nogueira's \mbox{result \cite{N2}} implies that there exists a subset \mbox{$B_n\subset\Lambda_n^*$} of full measure such that if $T\in\mathcal{T}(B_n\times {\mathscr P}_n^f)$ then
$T$ has $n_{\rm per}$ (stable) periodic components, $n_{\rm min}$ (robust) minimal components and $T$ has finite expansion. Moreover, $n_{\rm per}\ge 1$, $0\le n_{\rm min}< n/2$ and $n_{\rm per}+2\,n_{\rm min}\le n$. We claim that for almost all $T\in\mathcal{T}(B_n\times {\mathscr P}_n^f)$,  $n_{\rm min}=0$ implies $n_{\rm per}=n$. We will prove this by induction \mbox{on $n$}. Suppose that for all $1\le k \le n-1$, there exists a full measure set
$C_k\subset\Lambda_k$ such that all $k$-IET in $\mathcal{T}(C_k\times {\mathscr P}_k^f)$ without minimal components have $k$ stable periodic components. By the definition of $B_n$, for all $T\in \mathcal{T}(B_n\times {\mathscr P}_n^f)$, there exists $m=m(T)\in \N$ such that $T\in {\rm Dom}\,(\mathcal{R}^m)$
 and $T^{(m)}=\mathcal{R}^m(T)\in\mathcal{T}(\Lambda_n\times{\mathscr P}_n^{\rm red})$. 

Given ${(\lambda,\p)}\in B_n\times {\mathscr P}_n^f$, let $T=T_{(\lambda,\p)}$ and $T_{(\boldsymbol{\mu},{\bf q})} = T^{(m)}$.  By definition, ${\bf q}\in {\mathscr P}_n^{\rm red}$
and since $\lambda \in \Lambda_n^*$, we have
$\boldsymbol{\mu}\in\Lambda_n^*$. Let ${\bf q}=({\bf v}_1,\ldots,{\bf v}_n)\in{\mathscr P}_{n_1}^{\rm irred}\times\cdots\times{\mathscr P}_{n_s}^{\rm irred} $ be the decomposition of ${\bf q}$ into irreducible signed permutations
 and let
${\boldsymbol{\mu}}=(\boldsymbol{\mu_1},\ldots,\boldsymbol{\mu}_s)\in\Lambda_{n_1}^*\times\cdots\times\Lambda_{n_s}^*$be the associated decomposition of $\boldsymbol{\mu}$. Notice that $n_1+\ldots+n_j=n$.
 By Proposition \ref{abr}, there exists a full measure set $C_n\subset B_n$ such that if ${(\lambda,\p)}\in C_n\times {\mathscr P}_{n}^{\rm irred}$  then $(\boldsymbol{\mu}_j,{\bf v}_j)\in C_{n_j}\times {\mathscr P}_{n_j}$ for all $1\le j\le s$. Now let $(\lambda,\p)\in C_n$.
By Corollary \ref{same}, since $T=T_{(\lambda,\p)}$ has no minimal component we have that
$T_{(\boldsymbol{\mu},{\bf q})}$ have no minimal component. 
Consequently, $T_{({\boldsymbol \mu}_j,{\bf q}_j)}$ have no minimal component for all $1\le j\le s$.
Then by Keane \cite{K}, each $T_{({\boldsymbol \mu}_j,{\bf q}_j)}$ is either an oriented periodic 1-IET or
an $n_j$-IET with flips. In the second case, the induction hypothesis implies that  $T_{({\boldsymbol \mu}_j,{\bf q}_j)}$ has $n_j$ stable periodic components.
By Corollary \ref{same} and by the above, each IET in $\mathcal{T}(
C_n\times {\mathscr P}_n^{\rm irred})$ with flips without minimal components has $n_1+\ldots + n_j=n$ stable periodic components.
\cqd\\

\begin{figure}[h!]
\caption{Interval exchange transformation of 7 intervals}
 \centering
  \includegraphics[width=10cm]{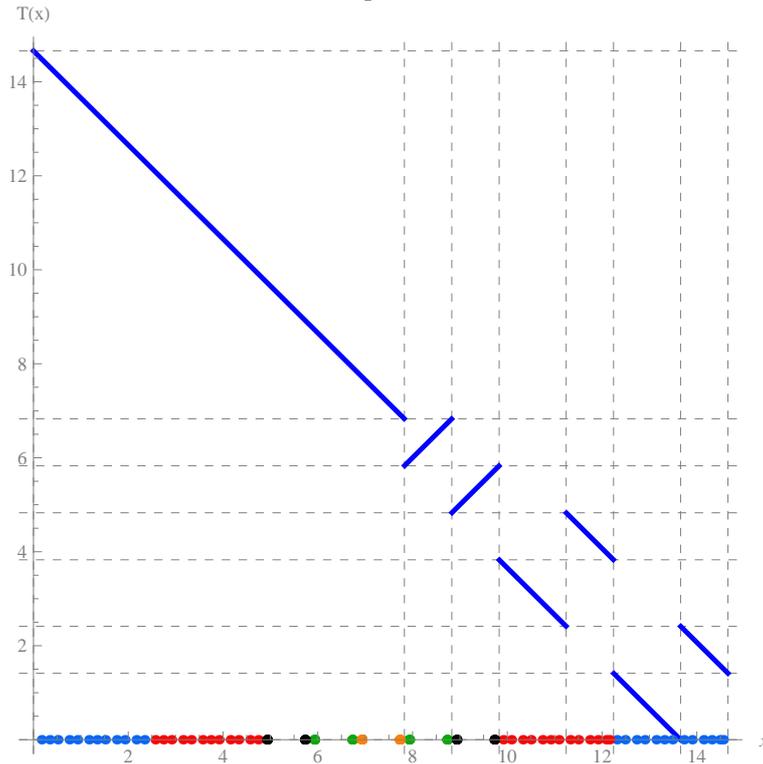}\label{Figura}
  \end{figure}

\noindent{\it Acknowledgments.} Part of this article was developed during the visit of the second author to the Institut de Math\'ematiques de Luminy (IML) - Universit\'e de la M\'editerran\'ee. B. Pires would like to acknowledge IML for the kind hospitality.


\begin{thebibliography}{99}

\bibitem{Ara} S. H. Aranson, {\em Trajectories on nonorientable two-dimensional manifolds},
Mat. Sbornik {\bf 80} (122) (1969) 314--333.

 \bibitem{Ar} P. Arnoux, {\em Un exemple de semi-conjugaison entre un {\'e}change d'intervalles et une translation sur le tore,} Bull. Soc. Math. France {\bf 116} (1988)  489-500. 

\bibitem{DN1} C.\ Danthony and A.\  Nogueira,
{\em Involutions lin\'eaires et
feuilletages mesur\'es},  
C.\ R.\ Acad.\ Sci.\ Paris S\'er.\ I Math. {\bf 307}  (1988) 409--412.

\bibitem{DN2} C.\ Danthony and A.\  Nogueira,
{\em Measured foliations on nonorientable surfaces},
Ann.\ Sci.\ \'Ecole Norm.\ Sup.\  {\bf 23}  (1990) 469--494.

\bibitem{Gu} C. Guti{\'e}rrez, {\em Smoothing continuous flows on two-manifolds and recurrences}, Ergodic Theory Dynam. Systems {\bf 6} (1986) 17--44.

\bibitem{KH} A.\ Katok and B.\ Hasselblatt,
{\em Introduction to the modern theory of dynamical systems},
Encyclopedia of Mathematics and its Applications, 54.
Cambridge University Press, Cambridge, 1995.

\bibitem{K} M.~Keane,  
{\em Interval exchange transformations,} 
Math. Z.  {\bf 141} (1975) 25--31. 

\bibitem{Lennes} N.\ Lennes, 
{\em On the Motion of a Ball on a Billiard Table},
Amer. Math. Monthly {\bf 12} (1905) 152--155. 

 \bibitem{Le} G. Levitt, {\em La décomposition dynamique et la diff{\'e}rentiabilit{\'e} des feuilletages des surfaces},.  Ann. Inst. Fourier (Grenoble) {\bf 37} (1987) 85--116.

 \bibitem{MaTa} H. Masur and S. Tabachnikov, 
{\em Rational billiards and flat structures}, in Handbook of dynamical systems, Vol. 1A, 1015--1089, North-Holland, Amsterdam, 2002. 

\bibitem{Masur} 
MR2186247 (2006i:37012) H. Masur, {\em Ergodic theory of translation surfaces}, in Handbook of dynamical systems. Vol. 1B, 527--547, Elsevier B. V., Amsterdam, 2006. 

 \bibitem{Ma}  A. Mayer, {\em Trajectories on the closed orientable surfaces},  Mat. Sbornik  {\bf 12} (1943) 71--84. 

\bibitem{N1} A.\ Nogueira,
{\em Nonorientable recurrence of flows and interval exchange
transformations},
J.\ Differential Equations  {\bf 70}  (1987)  153--166.

\bibitem{N2} A.\ Nogueira, 
{\em Almost all interval exchange transformations with flips are
  nonergodic},
Ergodic Theory Dynam.\ Systems  {\bf 9}  (1989) 515--525. 

\bibitem{R} G.~Rauzy, {\em Echanges d'intervalles et transformations 
induites,} Acta Arith. {\bf 34} (1979) 315--328.


\end{thebibliography}
\end{document}